\title[Existence of concentration optimizers for the Wigner distribution]{On the existence of optimizers for nonlinear time-frequency concentration problems: \\ the Wigner distribution}
\author{Federico Stra}
\address{Politecnico di Torino, Dipartimento di Scienze Matematiche ``G. L. Lagrange'', corso Duca degli Abruzzi 24, 10129 Torino, Italy}
\email{federico.stra@polito.it}
\author{Erling Svela}
\address{Department of Mathematical Sciences, Norwegian University of Science and Technology, 7491 Trondheim, Norway}
\email{erling.a.t.svela@ntnu.no}
\author{S. Ivan Trapasso}
\address{Politecnico di Torino, Dipartimento di Scienze Matematiche ``G. L. Lagrange'', corso Duca degli Abruzzi 24, 10129 Torino, Italy}
\email{salvatoreivan.trapasso@polito.it}
\newtheorem{theorem}{Theorem}[section]
\newtheorem{prop}[theorem]{Proposition}
\newtheorem{corollary}[theorem]{Corollary}
\newtheorem{lemma}[theorem]{Lemma} 
\theoremstyle{definition}
\newtheorem{question}{Question}
\theoremstyle{remark}
\newtheorem{remark}[theorem]{Remark} 
\newcommand{\C}{\mathbb{C}}
\newcommand{\R}{\mathbb{R}}
\newcommand{\Z}{\mathbb{Z}}
\newcommand{\N}{\mathbb{N}}
\newcommand{\T}{\mathbb{T}}
\newcommand{\cC}{\mathcal{C}}
\DeclareFontFamily{U}{mathx}{}
\DeclareFontShape{U}{mathx}{m}{n}{<-> mathx10}{}
\DeclareSymbolFont{mathx}{U}{mathx}{m}{n}
\DeclareMathAccent{\widehat}{0}{mathx}{"70}
\DeclareMathAccent{\widecheck}{0}{mathx}{"71}
\newcommand{\exn}[1]{#1^{(n)}}
\def\fn{f^{(n)}}
\def\Fkn{F_k^{(n)}}
\def\xn{x^{(n)}}
\def\xin{\xi^{(n)}}
\def\an{a^{(n)}}
\def\bn{b^{(n)}}
\def\cn{c^{(n)}}
\def\dn{d^{(n)}}
\def\den{\delta^{(n)}}
\def\zjn{z_j^{(n)}}
\def\zjpn{z_{j'}^{(n)}}
\def\zln{z_{\ell}^{(n)}}
\def\wkn{w_k^{(n)}}
\def\cJ{\mathcal{J}}
\def\cS{\mathcal{S}}
\def\rd{\mathbb{R}^d}
\def\rdd{\mathbb{R}^{2d}}
\renewcommand{\dd}[1]{\,\mathrm{d} #1}   
\renewcommand{\Re}{\mathrm{Re}}
\newcommand{\cSjn}{\cS_{j,j'}^{(n)}}
\def\cnj{c^{(n)}_{j,j'}}
\def\cnjp{c^{(n)}_{j',j}}
\def\dnj{d^{(n)}_{j,j'}}
\newcommand{\BJ}{W_{\mathrm{BJ}}}
\begin{document}

\begin{abstract}
We prove that, for any measurable phase space subset $\Omega\subset\mathbb{R}^{2d}$ with $0<|\Omega|<\infty$ and any $1\le p < \infty$, the nonlinear concentration problem
\[
\sup_{f \in L^2(\mathbb{R}^d)\setminus\{0\}}\frac{\|Wf\|_{L^p(\Omega)}}{\|f\|_{L^2}^2}
\]
admits an optimizer, where $Wf$ is the Wigner distribution of $f$. The main obstruction is that $Wf$ is covariant (not invariant) under time-frequency shifts, which impedes weak upper semicontinuity, so the effects of constructive interference must be taken into account. We close this compactness gap via concentration compactness for Heisenberg-type dislocations, together with a new asymptotic formula that quantifies the limiting contribution to concentration over $\Omega$ from asymptotically separated wave packets. When $p=\infty$ we also identify the sharp constant $2^d$ and show that it is attained.

We also discuss some related extensions: for $\tau$-Wigner distributions with $\tau \in (0,1)$ we isolate a chain phenomenon that obstructs the same strategy beyond the Wigner case ($\tau=1/2$), while for the Born--Jordan distribution in $d=1$ we obtain weak continuity, and thus existence of concentration optimizers for all $1\le p<\infty$ (the $p=\infty$ supremum equals $\pi$ but is not attained).
\end{abstract}

\subjclass[2020]{49Q10, 49R05, 42B10, 94A12, 81S30}
\keywords{Nonlinear time-frequency concentration, optimization, Wigner distribution, Born-Jordan distribution, concentration compactness}

\maketitle

\section{Introduction}

Though approaching its centennial, the Wigner distribution is still one of the most popular time-frequency representations in signal analysis, not to mention its prominent role in many problems and applications of quantum mechanics. Introduced somewhat enigmatically by Wigner in 1932 as a quasi-probability distribution on phase space to account for quantum corrections to classical statistical mechanics~\cite{Wigner32}, it was later rediscovered by Ville in 1948 \cite{ville}. Since then, it has become a widely used time-frequency method in signal processing due to its robustness and desirable features. 

To be specific, recall that the Wigner distribution of \(f\in L^2(\R^d)\) is defined by \begin{align*}
    Wf(z)=\int_{\R^d} e^{-2\pi i \xi \cdot y} f \Big(x+\frac{y}{2} \Big)\overline{f\Big(x-\frac{y}{2}\Big)} \ \dd y, \qquad z=(x,\xi)\in \rdd.
\end{align*}
Therefore, $Wf$ coincides with the Fourier transform of the two-point autocorrelation of $f$. Informally, we may also view $Wf(x,\xi)$ as the joint time-frequency energy density of $f$ localized near the phase space point $(x,\xi)$. We note that the map $f \mapsto Wf$ is inherently quadratic: For every $\alpha,\beta \in \C$ and $f,g \in L^2(\rd)$ we have
\[ W(\alpha f+\beta g) = \abs{\alpha}^2 Wf + \alpha\bar{\beta} W(f,g) + \bar{\alpha}\beta W(g,f) + \abs{\beta}^2 Wg, \]
where $W(f,g)$ and $W(g,f)$ denote cross-Wigner distributions, cf.\ \eqref{eq-cwig} below. The occurrence of these cross-interference terms, along with self-interference fringes, is a classical drawback in signal processing: Even for well-localized signals (such as Gaussian functions) the joint time-frequency energy exhibits non-trivial scattering phenomena in phase space~\cite{BDDO,cohen1995time,cohen1995uncertainty}. While the uncertainty principle prevents \(Wf\) from being localized within a domain \(\Omega \subset \rdd\) of arbitrarily small measure \cite{GrochenigUP}, it is natural to investigate whether optimal phase space concentration on a given domain can be achieved despite these interferences and obstructions --- a problem of both theoretical and practical interest, see for instance~\cite{LiebOstrover,RT1,Flandrin,BDW99,JanssenSurvey} and the references therein. 

There are several possible choices for how to measure phase space concentration. A convenient one is the $L^2$ norm of $Wf$ over $\Omega$, which has to be interpreted as the fraction of time-frequency energy trapped within $\Omega$. More generally, one can use the $L^p$ norms with $p \ne 2$, which can be rightfully viewed as favoring spreading or concentration/sparsity depending on the range of $p$. Globally, this heuristic point of view is reinforced by the $L^p$-norm inequalities due to Lieb~\cite{Lieb}: 
\[ 
\begin{cases}
    \norm{Wf}_{L^p} \ge C_{p,d} \norm{f}_{L^2}^2 & (1 \le p \le 2) \\ 
    \norm{Wf}_{L^p} \le C_{p,d} \norm{f}_{L^2}^2 & (p\ge 2), 
\end{cases} \qquad C_{p,d} = \Big(\frac{2^{p-1}}{p} \Big)^{d/p}.
\]
Nevertheless, when it comes to local versions, it has remained open whether $L^p$ concentration can be optimized over a given domain of finite, positive measure. This is precisely the problem addressed in this note, and our main result settles it in the affirmative. 

\begin{theorem}\label{BigTheorem}
    Let \(\Omega\subset\R^{2d}\) be measurable with \(0<|\Omega|<\infty\) and let \(p\in [1,\infty)\). The supremum\begin{align}\label{eq-WigSup}
        L=\sup_{f\in L^2(\R^d)\setminus\{0\}}\frac{\left(\int_{\Omega}|Wf(z)|^p \dd z\right)^{1/p}}{\|f\|_{L^2}^2}
    \end{align}
    is attained. 
\end{theorem}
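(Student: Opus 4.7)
I'd run concentration compactness adapted to the Heisenberg dislocation group of time-frequency shifts $\pi(z)$. Take a normalized maximizing sequence $f_n \in L^2(\R^d)$ with $\|f_n\|_{L^2}=1$ and $\|Wf_n\|_{L^p(\Omega)}\to L$, and extract (after a subsequence) a profile decomposition
\[
f_n = \sum_{j=1}^{J}\pi(z_n^{(j)})\phi_j + r_n^{(J)}, \qquad |z_n^{(j)}-z_n^{(k)}|\to\infty \text{ for } j\neq k,
\]
with $L^2$-decoupling $\|f_n\|_2^2=\sum_j\|\phi_j\|_2^2+\|r_n^{(J)}\|_2^2+o_n(1)$, each sequence $(z_n^{(j)})_n$ either bounded (WLOG $z_n^{(1)}\equiv 0$) or divergent, and a vanishing TF-concentration property of the remainder that I leverage to show $\limsup_n \|Wr_n^{(J)}\|_{L^p(\Omega)} \to 0$ as $J\to\infty$.

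The main step is an asymptotic formula for the limit of $\|Wf_n\|_{L^p(\Omega)}^p$. Expanding $Wf_n$ bilinearly and invoking the covariance
\[
W(\pi(z_1)g,\pi(z_2)h)(w) = e^{2\pi i\Phi_{z_1,z_2}(w)}\,W(g,h)\!\left(w-\tfrac{z_1+z_2}{2}\right),
\]
each $(j,k)$-term is localized near the midpoint $(z_n^{(j)}+z_n^{(k)})/2$, so it contributes to the $L^p(\Omega)$ norm only when the midpoint stays bounded. Orthogonality of the translation sequences restricts such surviving contributions to either the singleton $j=k=1$ or to opposite-escaping pairs $(j,j^\ast)$ with $z_n^{(j)}+z_n^{(j^\ast)}\to 2\bar z^{(j,j^\ast)}$. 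For distinct surviving terms the phase $\Phi_{z_1,z_2}$ oscillates with frequency growing linearly in $|z_n^{(j)}-z_n^{(k)}|$, and a Brezis--Lieb-type oscillation argument decouples the contributions in $L^p$, yielding
\[
\lim_{n\to\infty}\|Wf_n\|_{L^p(\Omega)}^p = \|W\phi_1\|_{L^p(\Omega)}^p + \sum_{(j,j^\ast)}\|W(\phi_j,\phi_{j^\ast})(\cdot-\bar z^{(j,j^\ast)})\|_{L^p(\Omega)}^p + o_J(1).
\]

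I then close the argument via polarization. The identity $4W(g,h)=W(g+h)-W(g-h)-iW(g+ih)+iW(g-ih)$, together with the translation-invariance $\sup_{\psi\ne 0}\|W\psi\|_{L^p(\Omega-\bar z)}/\|\psi\|_2^2=L$ (consequence of the covariance $W(\pi(\bar z)\psi)=W\psi(\cdot-\bar z)$ and unitarity of $\pi(\bar z)$), produces the key bound
\[
\|W(g,h)(\cdot-\bar z)\|_{L^p(\Omega)} \le L\bigl(\|g\|_2^2+\|h\|_2^2\bigr).
\]
Substituting in the asymptotic formula, using the elementary inequality $\sum_i a_i^p\le(\sum_i a_i)^p$ for $a_i\ge 0$ and $p\ge 1$, and sending $J\to\infty$,
\[
L^p \le L^p\left(\|\phi_1\|_2^{2p} + \sum_{(j,j^\ast)}\bigl(\|\phi_j\|_2^2+\|\phi_{j^\ast}\|_2^2\bigr)^p\right) \le L^p,
\]
forces equality throughout: exactly one summand carries the full $L^2$ mass (so the remainder vanishes in $L^2$) with the corresponding bound saturated. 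In the singleton case $\phi_1$ is itself an optimizer; in the pair case, saturation of the polarization estimate forces at least one combination $\phi_j+\bar\epsilon\phi_{j^\ast}$ to be an optimizer on $\Omega-\bar z^{(j,j^\ast)}$, and its time-frequency translate $\pi(\bar z^{(j,j^\ast)})(\phi_j+\bar\epsilon\phi_{j^\ast})$ solves the original problem on $\Omega$.

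\textbf{Main obstacle.} The core difficulty is the asymptotic formula: the expansion of a $p$-th power of a sum of many oscillating, localized bilinear terms requires a careful Brezis--Lieb-type decoupling in $L^p(\Omega)$, exploiting both the growing-frequency oscillation of the phase and the asymptotic separation of the midpoints. A secondary subtlety is the remainder estimate $\limsup_n\|Wr_n^{(J)}\|_{L^p(\Omega)}\to 0$ as $J\to\infty$: this is not automatic from $L^2$-boundedness of $r_n^{(J)}$ and requires transferring the TF non-concentration of the remainder (measured by a fixed-window STFT) into control of its own Wigner distribution over the bounded set $\Omega$.
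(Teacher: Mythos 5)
Your overall strategy — profile decomposition with the Heisenberg dislocation group, isolation of a compact profile together with surviving antipodal pairs, a polarization bound, and a chain $L\le\cdots\le L$ — matches the paper's plan. However, there is a genuine gap at the step you explicitly flag as the main obstacle, and I do not think it can be patched in the form you state.

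The claimed asymptotic formula
\[
\lim_{n\to\infty}\|Wf_n\|_{L^p(\Omega)}^p
=\|W\phi_1\|_{L^p(\Omega)}^p+\sum_{(j,j^\ast)}\|W(\phi_j,\phi_{j^\ast})(\cdot-\bar z^{(j,j^\ast)})\|_{L^p(\Omega)}^p+o_J(1)
\]
is false for $p\ne 2$, and in fact the constant in front of each cross term is also wrong. First, the oscillating block that survives from a pair $(j,j^\ast)$ is the real-valued $\cS_n=2\Re\big(W(\pi(z^{(n)}_j)\phi_j,\pi(z^{(n)}_{j^\ast})\phi_{j^\ast})\big)=2|W(\phi_j,\phi_{j^\ast})(\cdot-c_n)|\cos(\cdots)$, and its $L^p(\Omega)$ norm converges by Riemann--Lebesgue/equidistribution to $2\,\cC_p\,\|W(\phi_j,\phi_{j^\ast})\|_{L^p(\Omega-\bar z)}$ with $\cC_p=\big(\frac1{2\pi}\int_0^{2\pi}|\cos\theta|^p\,d\theta\big)^{1/p}<1$, not to $\|W(\phi_j,\phi_{j^\ast})\|_{L^p(\Omega-\bar z)}$. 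Second, and more seriously, a Brezis--Lieb-type additivity of $p$-th powers requires pointwise a.e.\ convergence of the fluctuating part, which fails here since $\cos(\nu_n\cdot z)$ does not converge a.e.; in general, for a fixed profile $A$ and a rapidly oscillating $B_n$, one has $\int_\Omega|A+B_n|^p\to\int_\Omega\, \mathbb{E}_\theta|A+|B|e^{i\theta}|^p$, which is not $\|A\|_p^p+\lim\|B_n\|_p^p$ unless $p=2$. (For $p=1$ one sees this directly: if $0<\epsilon<\inf_{\supp A}A$ and $B_n=\epsilon\cos(\nu_n z)$, then $\int|A+B_n|\to\int A$, whereas $\lim\int|B_n|>0$.) Note also that the inequality version cannot rescue the argument: for $p\ge1$, the triangle inequality gives $\|a+b\|_p^p\le(\|a\|_p+\|b\|_p)^p$, which is \emph{larger}, not smaller, than $\|a\|_p^p+\|b\|_p^p$, so your chain would point in the wrong direction.

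The correct route, which the paper follows, is to stay with the $L^p$ norm (not its $p$-th power), apply the triangle inequality across the symmetric cross blocks, and then compute the limit of each single block exactly via equidistribution: $\lim_n\|\cS_n\|_{L^p(\Omega)}=2\cC_p\|W(\phi_j,\phi_{j^\ast})\|_{L^p(\Omega-\bar z)}$. This produces the constant $2\cC_p$, which must then be matched on the polarization side. Your four-term polarization identity gives $\|W(g,h)(\cdot-\bar z)\|_{L^p(\Omega)}\le L(\|g\|_2^2+\|h\|_2^2)$, but this does not carry the factor $2\cC_p$ and would leave a gap of size $2\cC_p<2$ in the chain. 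The paper instead averages the two-term polarization $h_\theta^\pm=f\pm e^{i\theta}g$ over $\theta\in[0,2\pi]$, which reproduces exactly the $2\cC_p$ constant; the chain $L\le\cdots\le L$ then closes and forces every inequality to be an equality, which is what yields either the compact profile or the one-parameter family $\pi(\bar z)(\phi_j+e^{i\theta}\phi_{j^\ast})$ as optimizers. A further small point: your remainder control $\limsup_n\|W r_n^{(J)}\|_{L^p(\Omega)}\to 0$ is necessary but not sufficient, since the bilinear expansion of $Wf_n$ also produces mixed terms $W(F_J^{(n)},r_n^{(J)})$ between the profile part and the remainder, and these must be estimated separately (the paper does this through the Wiener amalgam embedding $W(f,g)\in W(L^2,L^\infty)$ combined with $\|w_k^{(n)}\|_{M^\infty}\to 0$).
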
 \noindent The same conclusion holds in the case where $p=\infty$, the proof being considerably easier --- see Section~\ref{sec-linfty} below, where we also find the optimal value $L=2^d$. 

Let us highlight that in the linear case (that is, optimizing the integral \(\int_{\Omega} Wf(z) \dd z\) --- see the recent monograph \cite{lerner} for interesting related problems) a minimax argument reduces the concentration problem to the analysis of a certain compact operator, see for instance~\cite{Flandrin,LiebOstrover,DaubechiesExt}. On the other hand, the nonlinear nature of the problem \eqref{eq-WigSup} precludes such linear techniques and, in spite of its simple formulation, requires sophisticated tools. Indeed, our proof of Theorem~\ref{BigTheorem} relies on the theory of \textit{concentration compactness} \cite{ConcCompactnessBook,Lions1,Lions2}, a powerful framework especially designed to handle loss of compactness due to invariance of the target functional under the action of a non-compact family of transformations (the so-called \textit{dislocations}). Before giving more details, let us stress that this approach has been successfully used in time-frequency concentration problems only recently: The authors of \cite{AmbConcentration} solved with this approach the exact analogue of the problem in this note for the \textit{ambiguity function}, that is
\[    Af(z)=\int_{\R^d} e^{-2\pi i \xi \cdot y} f \Big(y+\frac{x}{2} \Big)\overline{f\Big(y-\frac{x}{2}\Big)} \ \dd y, \qquad z=(x,\xi)\in \rdd. \]

While the optimization objective in \eqref{eq-WigSup} is closely related to the one considered in \cite{AmbConcentration} (ultimately by symplectic Fourier transform), the Wigner distribution and the ambiguity function behave quite differently in some respects. Concretely, recall that the time-frequency shift of $f \in L^2(\rd)$ by $z=(x,\xi) \in \rdd$ is defined by
\[ \pi(z)f(y) = e^{2\pi i \xi \cdot y} f(y-x). \]
The ambiguity function is completely invariant under time-frequency shifts, that is $\abs{A(\pi(z)f)} = \abs{Af}$, hence the corresponding concentration functional \eqref{eq-WigSup} is invariant as well. These remarks clearly motivate resorting to concentration compactness, as the main defect of compactness in this problem comes from the non-compact symmetry $f\mapsto \pi(z)f$, which allows phase-space mass to drift away without changing the objective. To keep track of the energy that might otherwise be lost at infinity we perform a \textit{profile decomposition}: Up to subsequences, a maximizing sequence $f^{(n)}$ can be split into a finite sum of separated packets as
\[
f^{(n)}=\sum_{j=1}^{k}\pi(z^{(n)}_j)f_j \;+\; w_k^{(n)}, \qquad k \in \N\setminus\{0\},
\]
where $f_1,\ldots,f_k$ are fixed \textit{profiles} whose centers $z^{(n)}_j$ separate asymptotically (i.e., $|z^{(n)}_j-z^{(n)}_{j'}|\to\infty$ for $j\neq j'$) and the remainder $w_k^{(n)}$ is small in a suitable sense --- see Lemma~\ref{lem-profdec} below for a more precise account on the matter.

On the other hand, the behavior of the Wigner transform under time-frequency shift is subtler: it is only \textit{covariant} under time-frequency shifts, that is $\abs{W(\pi(z)f)(w)} = \abs{Wf (w -z)}$ (cf.\ \eqref{eq-covwig} below). It is therefore not evident whether a concentration compactness strategy could lead to useful conclusions in this context. To shed some light on the issue, we emphasize that the optimization is effectively translation-invariant, since
\begin{equation*}
  L
  =\sup_{f \in L^2(\rd)}\frac{\|Wf\|_{L^p(\Omega)}}{\|f\|_{L^2}^2}
  =\sup_{z\in\rdd}\sup_{f \in L^2(\rd)}\frac{\|\,Wf(\cdot-z)\,\|_{L^p(\Omega)}}{\|f\|_{L^2}^2},
\end{equation*}  and that the target functional is invariant under antipodal pairing, that is \[\abs{W(\pi(z)f,\pi(-z)f)} = \abs{Wf}.\] Nevertheless, loss of compactness still stems from mass escaping to infinity in phase space as before, but the decisive difference is that, unlike the ambiguity case, Wigner cross-terms may survive in the limit when \textit{midpoints} of profiles remain bounded, producing potentially constructive interference from antipodally shifted profiles. Moreover, as detailed below, this invariance causes the failure of semicontinuity in the optimization problem, which further justifies attempts beyond the direct methods, such as resorting to concentration compactness. A crucial component of our analysis consists in isolating these \textit{surviving pairs} in a profile decomposition and quantifying their limiting contribution to the $L^p(\Omega)$ norm (see Section~\ref{sec-crosst}), which is the key extra ingredient (of independent interest) needed to make the concentration compactness approach work for the Wigner concentration problem. 

This leaves open the possibility of \textit{exotic optimizers} when surviving pairs, despite being arbitrarily far apart in phase space in the limit, interfere constructively to optimize concentration on $\Omega$. Although we cannot rule out the occurrence of such a degenerate scenario, we show that it is exceptional in a sense detailed in Section~\ref{sec:Exotic}. 

The next natural step beyond the Wigner transform is considering the analogous nonlinear concentration problem for more general covariant time-frequency representations. A large and popular family of those stems from averaging the Wigner distribution with a suitable kernel: the so-called \textit{Cohen class} consists of distributions of the form
\[  
Qf = Wf*\sigma, \qquad \sigma \in \cS'(\rdd). 
\]
Phase space averaging naturally promotes smoothing, which in turn is expected to better tame interferences (hence, degenerate scenarios) in concentration problems. We are able to confirm this intuition for several types of kernels, but proofs involve a completely different set of techniques than those used in this paper (that is, the recent advances in quantum harmonic analysis~\cite{Bible1,Tauberian,QTFA}), and this is the reason why we prefer to postpone these studies to a separate, forthcoming manuscript. 

On the other hand, such heuristics do not make a general rule: In Section~\ref{sec-gen} we discuss some findings for a simple one-parameter generalization of the Wigner distribution, that is the $\tau$-Wigner distribution: 
\[ W_{\tau}f(z)=\int_{\R^d}e^{-2\pi i \xi \cdot y} f(x+\tau y)\overline{f(x-(1-\tau)y)} \; \dd y, \qquad \tau \in (0,1). \]
These are members of the Cohen class, since for $\tau \ne 1/2$ they arise from averaging the Wigner distribution with a chirp-like kernel in the Fourier domain: 
\[ W_\tau f = Wf * \sigma_\tau, \qquad \widehat{\sigma_\tau}(x,\xi) = e^{-2\pi i (\tau-1/2) x\cdot\xi}. \]
Despite appearances, when $\tau \ne 1/2$ the structure of surviving cross terms is entirely different from that of the Wigner case. We still attempt to push the same strategy as far as possible, and then pinpoint the currently open gap --- which is again related to quantification of asymptotic contributions, where now pairs of escaping profiles may also exhibit infinite chaining phenomena. 

To mitigate the asymmetry with respect to $\tau$ beyond the Wigner case we employ an additional averaging strategy, originating in signal processing to further tame the effect of interferences for Wigner-type distributions~\cite{Hlawatsch}. Indeed, the \textit{Born--Jordan distribution} can be defined equivalently by 
\[ \BJ f(z)\,=\,\int_0^1 W_\tau f(z)\, \dd \tau = Wf* \Theta_\sigma, 
	\qquad 
	\Theta(\zeta)=\frac{\sin(\pi\,\zeta_1\!\cdot\!\zeta_2)}{\pi\,\zeta_1\!\cdot\!\zeta_2}, \quad \zeta=(\zeta_1,\zeta_2) \in \rdd, 
 \]

 where $\Theta_\sigma$ stands for the symplectic Fourier transform of $\Theta$. The Born--Jordan kernel is of very low regularity, as it fails to be locally in any $L^p$ space --- see for instance \cite{CGT_18,DG_book_BJ} and the references therein, also for framing its relevance in quantum mechanics. Nevertheless, at least in the planar case (i.e., $d=1$) we are able to confirm the smoothing effect of integration over $\tau$ and obtain a full affirmative answer to the concentration problem (see Section~\ref{ssec:BornJordan}), although for $p=\infty$ the supremum is not attained. In fact, the profile decomposition machinery actually shows that the concentration functional corresponding to the Born--Jordan distribution is weakly continuous on $L^2(\rd)$. This fact is particularly interesting since it is a nontrivial\footnote{For a fixed \(g\in L^2(\rd)\) the quadratic map $f \mapsto |A(f,g)|^2 = Wf*W\check{g}$, known as the spectrogram of \(f\), is also sequentially weakly continuous from $L^2(\rd)$ to $L^p(\Omega)$, for $1\leq p<\infty$. However, its continuity is a direct consequence of the analogous sequential weak continuity of the linear map $f \mapsto A(f,g)$ computing the ambiguity transform, which was proved in \cite[Proposition~5.1]{AmbConcentration}.} instance of a $L^p(\Omega)$-concentration functional associated with a quadratic time-frequency distribution where (semi)continuity does not fail. Moreover, this novel evidence is in line with known heuristics from signal processing about the strong smoothing effect of the Born--Jordan kernel, but here it is rigorously confirmed at a deeper topological level in terms of a regularity upgrade with respect to the analogous problem for the Wigner distribution. The current techniques do not allow us to extend the conclusion beyond the one-dimensional case, but it is actually unclear whether the supremum is finite when $d>1$. Indeed, the nodal set of the kernel is a considerably larger manifold in that case. Perhaps restricting the optimization environment to a subspace of $L^2(\rd)$ consisting of more regular functions could help (e.g., the Feichtinger algebra $M^1(\rd)$ or other modulation spaces \cite{Fei,feichtinger1983modulation}), but these questions are left to future investigations.\newline 

The remainder of the paper is organized as follows. We collect the required preliminaries in Section~\ref{sec-prel}, while the technical results needed to prove Theorem~\ref{BigTheorem} are proven in Section~\ref{sec-crosst}, where we also discuss some consequences, such as failure of semicontinuity. In Section~\ref{sec-proofbig} we give the proof of Theorem~\ref{BigTheorem}. The concentration problem in the case \(p=\infty\) is settled in Section~\ref{sec-linfty}. We conclude with the analysis of the concentration problem for the \(\tau\)-Wigner and Born--Jordan distributions in Section~\ref{sec-gen}.

\section{Preliminaries}\label{sec-prel}

\subsection{Notation} We write \(a\lesssim b\) if there is a universal constant \(C>0\) such that \(a\leq C\ b\). If the constant depends on an auxiliary parameter \(\lambda\), that is \(C=C(\lambda)\), then we write \(a\lesssim_\lambda b\).

The inner product on $L^2(\rd)$ is assumed to be conjugate-linear in the second argument, that is $\langle f,g \rangle = \int_{\rd} f(y) \overline{g(y)} \dd y$. The same brackets are used to denote the duality between a temperate distribution $f \in \cS'(\rd)$ and a function $g$ in the Schwartz class. 

If a sequence \(\{f^{(n)}\}\) in \(L^2(\rd)\) converges weakly to \(f\) we write \(f^{(n)}\rightharpoonup f.\)

The Fourier transform of $f \colon \rd \to \C$, whenever meaningful, is $\widehat{f}(\omega) = \int_{\rd} e^{-2\pi i \omega \cdot y} f(y) \dd y$. 

For $\Omega \subset \rdd$ we denote by \(\|\cdot\|_{L^p(\Omega)}\) the local \(p\)-norm on \(\Omega\), while \(\|\cdot\|_{L^p}\) always denotes the global \(p\)-norm over $\rdd$. 

\subsection{Some results in time-frequency analysis}

A recurring theme in the treatment of quadratic time-frequency representations is the (unavoidable) appearance of cross terms. In our context, this crucial role is played by the \textit{cross-Wigner distribution}: Given $f,g\in L^2(\rd)$, we set
\begin{equation} \label{eq-cwig}
    W(f,g)(z)=\int_{\R^d} e^{-2\pi i \xi \cdot y} f\left(x+\frac{y}{2} \right)\overline{g\left(x-\frac{y}{2}\right)} \;\dd y, \qquad z=(x,\xi)\in\rdd. 
\end{equation}
Let us now collect some important properties of the cross-Wigner distribution, see~\cite[Section 4]{Grochenig} for details. Letting \(g=f\) recovers $Wf=W(f,f)$. We have that \(\overline{W(f,g)}=W(g,f)\), and in particular \(Wf\) is real. 

A simple density argument, taking $g \in \cS(\rd)$ first, shows that $W(f,g) \in L^2(\rdd)$ for all $f,g \in L^2(\rd)$, and the following orthogonality relation (Moyal's identity) holds for \(f_1,f_2,g_1,g_2\in L^2(\rd)\):
\begin{equation*}  \langle W(f_1,g_1),W(f_2,g_2)\rangle_{L^2(\rdd)}=\langle f_1,f_2\rangle_{L^2(\rd)}\overline{\langle g_1,g_2\rangle_{L^2(\rd)}}. \end{equation*}
As a result, and by a straightforward application of Cauchy-Schwarz's inequality, we obtain the norm bounds
\begin{equation} \label{eq-wignorm-bds}
    \norm{W(f,g)}_{L^2} = \|f\|_{L^2}\|g\|_{L^2}, \qquad \|W(f,g)\|_{L^\infty}\leq 2^d\|f\|_{L^2}\|g\|_{L^2}.
\end{equation}
Furthermore, it can be proved that $W(f,g)$ belongs to the space $C_0(\rdd)$ of continuous functions on $\rdd$ that vanish at infinity.

A crucial property for our purposes is the \textit{covariance under time-frequency shifts}. Recall that, given \(z=(x,\xi)\in \rdd\), the time-frequency shift \(\pi(z)\) acts on \(f\in L^2(\rd)\) by \(\pi(z)f(y)=e^{2\pi i \xi \cdot y} f(y-x)\). 
Setting \([z,u]\) for the symplectic inner product of \(z\) and \(u\), that is 
\[ [z,u]=x_z\cdot\xi_u-\xi_z\cdot x_u=z\cdot Ju, \qquad J=\begin{pmatrix}
    O & I \\
    -I & O
\end{pmatrix}\in \R^{2d\times 2d}, \] the behavior of the Wigner distribution under time-frequency shifts reads as follows.

\begin{lemma}[\!\!{\cite[Proposition 3.1]{CNT_19}}] \label{lem-covwig}
	Let $a=(x_a,\xi_a),\ b=(x_b,\xi_b) \in \rdd$ and set
	\begin{equation*} 
	c(a,b)
	\coloneqq \frac{1}{2}\big(x_a+ x_b\,,\,\xi_a+\xi_b\big),
	\end{equation*}
	Then, for all $f,g \in L^2(\rd)$ and $z \in \rdd$, 
		\begin{equation}
			W(\pi(a)f,\pi(b)g)(z)
			= e^{\pi i (\xi_a+\xi_b)\cdot(x_a-x_b)}e^{2\pi i [z,a-b]} \, W(f,g)\!\big(z-c(a,b)\big).
			\label{eq-covwig}
		\end{equation}
\end{lemma}

Refined continuity properties of the mapping \(f\to Wf\) can be given for functions with refined time-frequency regularity, such as those belonging to modulation and Wiener amalgam spaces. To avoid introducing auxiliary notions we have chosen to follow the approach of~\cite[Section 17]{deGossonBook}, which is equivalent to the standard one via short-time Fourier transform up to harmless global constants. For a thorough treatment of modulation spaces, as well as their standard definition, we direct the reader to~\cite{feichtinger1983modulation,Feichtinger2006}.

We say that a temperate distribution \(f\in \mathcal{S}'(\rd)\) belongs to the modulation space \(M^{\infty}(\rd)\) if for some (in fact, any) $g \in \cS(\rd)\setminus\{0\}$ we have \begin{align*}
    \|f\|_{M^{\infty}}\coloneqq \sup_{z\in\rdd} \abs{W(f,g)(z)}<\infty. 
\end{align*}
Different choices of $g$ result in equivalent norms on the Banach space $M^\infty(\rd)$. We also stress the continuous inclusion $L^2(\rd) \hookrightarrow M^\infty(\rd)$ in view of \eqref{eq-wignorm-bds}. 

We also need to deal with a special Wiener amalgam space~\cite{FournierStewart}: We say that a measurable function \(f \colon \rd\to\C\) belongs to the Wiener amalgam space \(W(L^2,L^\infty)(\rd)\) if for some (hence any) function $g \in C^\infty(\rd)\setminus\{0\}$ with compact support we have
\begin{equation}
    \|f\|_{W(L^2,L^\infty)}\coloneqq \norm{ \norm{ f(y) g(y-x)}_{L^2_y}}_{L^\infty_x}<\infty.
\end{equation}
See also ~\cite{FL06,Fournier,Heil}. Intuitively speaking, we may think of a function in \(W(L^2,L^\infty)(\rd)\) as having simultaneously local \(L^2\) and global \(L^\infty\) regularity. As a concrete instance of this viewpoint, it is not difficult to show that if \(K\) is a compact subset of \(\R^d\) then 
\begin{equation} \label{eq-WL2Linfty-L2K}
    \|f\|_{L^2(K)}\lesssim_K\|f\|_{W(L^2,L^\infty)}\lesssim \|f\|_{L^2},
\end{equation}
and therefore the continuous inclusions \(L^2(\rd) \hookrightarrow W(L^2,L^\infty)(\R^d) \hookrightarrow L^2(K)\) hold.

A key part of our argument involves refining the bounds \eqref{eq-wignorm-bds} for the cross-Wigner distribution \(W(f,g)\) in terms of the norms just introduced. 

\begin{theorem}[\!\!{\cite[Theorem 4.12]{CT_20}}]\label{thm-embed}
    Let \(f\in L^{2}(\rd)\) and \(g\in M^{\infty}(\rd)\). Then \(W(f,g)\in W(L^2,L^\infty)(\rdd)\), with
		\begin{align*}
			\|W (f,g)\|_{W(L^2,L^\infty)}\lesssim_{d} \|f\|_{L^2}\|g\|_{M^{\infty}}.
		\end{align*}
	\end{theorem}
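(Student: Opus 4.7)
The goal is to prove the mixed-norm bound $\|W(f,g)\|_{W(L^2,L^\infty)} \lesssim_d \|f\|_{L^2}\|g\|_{M^\infty}$. My first step would be to exploit the covariance of the cross-Wigner (Lemma~\ref{lem-covtau}) with $a=b=-w$: both phase factors in the formula collapse to $1$, yielding the identity $|W(f,g)(z+w)| = |W(\pi(-w)f, \pi(-w)g)(z)|$. Since the $L^2$ norm is invariant under time-frequency shifts, and the $M^\infty$ norm is likewise shift-invariant (by applying the same covariance to $W(\pi(-w)g, g_0)$ with any Schwartz window $g_0$), the supremum over translations $w$ implicit in the $W(L^2,L^\infty)$ norm collapses to a single local estimate
\[
\int_{\rdd} |W(f,g)(z)|^2\,|\phi(z)|^2\,\dd z \;\lesssim_d\; \|f\|_{L^2}^2\,\|g\|_{M^\infty}^2
\]
for a fixed bump $\phi \in C_c^\infty(\rdd)\setminus\{0\}$.

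The core of the argument is then a Gabor-frame expansion bridging the $L^2$ structure on $f$ and the weaker $M^\infty$ structure on $g$. I would fix a Gabor frame $\{\pi(\lambda)g_0\}_{\lambda\in\Lambda}$ of $L^2(\rd)$ with Schwartz generator $g_0$ and Schwartz dual window $\gamma$, and expand $f = \sum_\lambda c_\lambda\,\pi(\lambda)\gamma$ with $\sum_\lambda|c_\lambda|^2 \lesssim \|f\|_{L^2}^2$. Substituting and applying covariance with $b=0$ produces
\[
W(f,g)(z) \;=\; \sum_{\lambda\in\Lambda} c_\lambda\,e^{i\theta_\lambda(z)}\,W(\gamma,g)\!\left(z - \tfrac{\lambda}{2}\right),
\]
where $\theta_\lambda(z) = \pi\,\xi_\lambda\cdot x_\lambda + 2\pi[z,\lambda]$. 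By the very definition of the $M^\infty$ norm (choosing $\gamma$ as defining window) one has $|W(\gamma,g)(\cdot)| \leq \|g\|_{M^\infty}$ uniformly on $\rdd$. A Schur/Cotlar-type estimate on the series above, combining the $\ell^2$ bound on the $c_\lambda$, the oscillation of the phase differences $e^{i(\theta_\lambda-\theta_\mu)}$ (providing almost-orthogonality across the lattice), and the localization by $\phi$, should then deliver the bilinear bound.

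The principal obstacle is that a generic $g\in M^\infty(\rd)$ can be as singular as a Dirac delta or as nonlocal as a constant function, so $W(\gamma,g)$ is merely uniformly bounded on $\rdd$ without any decay at infinity. A naive Cauchy--Schwarz bound on the Gabor series would therefore diverge, and one must genuinely exploit the oscillatory phases $\theta_\lambda - \theta_\mu$ to extract the almost-orthogonality that makes the bilinear kernel summable. This is the quantitative heart of the proof, and it ultimately pins down the implicit dimensional constant.
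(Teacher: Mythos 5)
The paper does not actually prove this statement: it is imported verbatim from \cite[Theorem~4.12]{CT_20}, so there is no internal proof here to compare against. Your proposal can nonetheless be assessed on its own terms. The reduction via covariance with $a=b=-w$ is sound: both $\|\cdot\|_{L^2}$ and $\|\cdot\|_{M^\infty}$ are invariant under time--frequency shifts, so the outer supremum in the $W(L^2,L^\infty)$-norm does collapse to a single local $L^2$-estimate against a fixed bump $\phi$.

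The Gabor-expansion/Cotlar step, however, has a genuine gap, and the oscillatory phases do \emph{not} supply the needed almost-orthogonality on their own. For a Schur test you need a pointwise decay $|K(\lambda,\mu)|\lesssim_N (1+|\lambda-\mu|)^{-N}\|g\|_{M^\infty}^2$ of the kernel
\[
K(\lambda,\mu)=\int_{\rdd} e^{2\pi i[z,\lambda-\mu]}\,W(\gamma,g)\Big(z-\tfrac{\lambda}{2}\Big)\,\overline{W(\gamma,g)\Big(z-\tfrac{\mu}{2}\Big)}\,|\phi(z)|^2\dd z,
\]
and the natural tool is repeated integration by parts against the linear phase. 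But the $z$-derivatives of $W(\gamma,g)$ are not uniformly bounded: differentiating the cross-Wigner spits out polynomial weights, for instance $\partial_{\xi_j}W(\gamma,g)=-4\pi i\,W(X_j\gamma,g)+4\pi i\,x_j\,W(\gamma,g)$ with $X_j\gamma(t)=t_j\gamma(t)$, and likewise for $\partial_{x_j}$. Hence $|\partial_z^\alpha W(\gamma,g)(z)|\lesssim_\alpha(1+|z|)^{|\alpha|}\|g\|_{M^\infty}$, and after recentering by $\lambda/2$ this becomes $\approx(1+|\lambda|)^{|\alpha|}$ on $\supp\phi$. Each integration by parts thus costs $1/|\lambda-\mu|$ but may cost back up to $|\lambda|+|\mu|$, and these two do not compare when $\lambda$ and $\mu$ are both large yet nearby. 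The uniform bound $|K(\lambda,\mu)|\lesssim\|\gamma\|_{L^2}^2\|g\|_{M^\infty}^2$ (which follows from the theorem itself) shows the quadratic form is bounded on the \emph{range} of the analysis operator, but this is tautological; what the Schur test demands --- absolute decay of the kernel --- is not established by your sketch, and I do not see how to obtain it from phase oscillation alone when $g$ is merely in $M^\infty$.

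The clean route, and the one effectively taken in \cite{CT_20}, tensorizes in the short-time Fourier transform domain and never touches a lattice. Writing $V_h u(z)=\langle u,\pi(z)h\rangle$, there is an explicit identity (often called a magic formula) for the STFT of a cross-Wigner: with window $\Phi_0=W(\phi_1,\phi_2)$, $\phi_1,\phi_2\in\cS(\rd)$, one has, up to a unimodular phase,
\[
\big|V_{\Phi_0}\big(W(f,g)\big)(z,\zeta)\big|=\Big|V_{\phi_1}f\Big(z+\tfrac{J\zeta}{2}\Big)\Big|\cdot\Big|V_{\phi_2}g\Big(z-\tfrac{J\zeta}{2}\Big)\Big|.
\]
Since $\|F\|_{W(L^2,L^\infty)(\rdd)}$ is comparable (for any Schwartz window) to $\sup_z\|V_{\Phi_0}F(z,\cdot)\|_{L^2_\zeta}$, the amalgam bound reads
\[
\sup_z\big\|V_{\Phi_0}\big(W(f,g)\big)(z,\cdot)\big\|_{L^2_\zeta}^2
\ \le\ \|V_{\phi_2}g\|_{L^\infty}^2\int_{\rdd}\Big|V_{\phi_1}f\Big(z+\tfrac{J\zeta}{2}\Big)\Big|^2\dd\zeta
\ \lesssim_d\ \|g\|_{M^\infty}^2\|f\|_{L^2}^2
\]
by Moyal's identity and a linear change of variables. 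This factors the dependence on $f$ and $g$ exactly rather than approximately, so no almost-orthogonality across a lattice is required --- which is precisely the step your proposal leaves open.
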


\subsection{Profile decomposition} We will use profile decomposition in \(L^2(\R^d)\), using the time-frequency shifts \(\{\pi(z)\}_{z\in \R^{2d}}\) as the set of dislocations. The details of this decomposition can be found in~\cite{AmbConcentration}, where this technique was introduced in order to study a similar problem. The defining properties are recalled below. For an account of the general theory, see~\cite{Lions1,Lions2,ConcCompactnessBook}. Here we confine ourselves to collecting the relevant facts in the following result. 

\begin{lemma} \label{lem-profdec}
    Let \(\fn\) be a sequence in \(L^2(\R^d)\) such that \(\displaystyle \limsup_{n\to \infty} \|\fn\|_{L^2}\leq 1\). There exists a subsequence (still denoted with $\fn$), profiles \(f_j\in L^2(\R^d)\) and points \( \zjn \in \R^{2d}\), $j \in \N$, such that for every integer \(k \ge 1\) we have 
    \begin{align*}
        \fn=\sum_{j=1}^k\pi(\zjn)f_j+\wkn,
    \end{align*} where \(\wkn \in L^2(\R^d)\) and the following conditions hold: 
    
    \begin{itemize}
        \item $\displaystyle\lim_{n\to \infty} |\zjn - \zjpn|=+\infty$ whenever \( j \neq j' \), \( f_j\ne 0\) and \(f_{j'}\neq 0\). 
        \item $\displaystyle \sum_{j=1}^k \|f_j\|_{L^2}^2+\displaystyle\limsup_{n\to\infty}\|\wkn\|_{L^2}^2\leq 1$.
        \item $\displaystyle \lim_{k\to \infty}\limsup_{n\to \infty} \|\wkn\|_{M^\infty}=0$.
        \item For each \(k\geq 1\) and \(1\leq j \leq k\) we have \(\displaystyle\lim_{n\to \infty} \pi(\zjn)^*\wkn=0\) in the weak topology on \(L^2(\R^d)\).
        \item $\displaystyle \lim_{n\to \infty}\Big\|\sum_{j=1}^k\pi(\zjn)f_j \Big\|_{L^2}^2=\sum_{j=1}^k\|f_j\|_{L^2}^2$. 
    \end{itemize}
\end{lemma}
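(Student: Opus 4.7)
The plan is to adapt the standard iterative extraction of concentration compactness, taking time-frequency shifts as the family of dislocations and the $M^{\infty}$-norm as the concentration detector, essentially following the construction in \cite{AmbConcentration}. The key observation tethering the argument is that, via the identity relating the Wigner distribution to the short-time Fourier transform, one has $\|f\|_{M^{\infty}} \asymp \sup_{z \in \rdd}|\langle f, \pi(z) g_{0}\rangle|$ for any fixed window $g_{0} \in \cS(\rd)\setminus\{0\}$. Thus concentration in $M^{\infty}$ is precisely the obstruction to the sequence being weakly negligible modulo time-frequency shifts.

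First I would extract profiles iteratively. Set $w_{0}^{(n)} = f^{(n)}$. Having defined $w_{j-1}^{(n)}$, if $\limsup_{n}\|w_{j-1}^{(n)}\|_{M^{\infty}} = 0$, set $f_{j} = 0$ and $z_{j}^{(n)}$ arbitrary. Otherwise, passing to a subsequence along which $\|w_{j-1}^{(n)}\|_{M^{\infty}} \ge 2\delta_{j} > 0$, pick $z_{j}^{(n)}$ with $|\langle w_{j-1}^{(n)}, \pi(z_{j}^{(n)}) g_{0}\rangle| \ge \delta_{j}$. Since $\{\pi(z_{j}^{(n)})^{*} w_{j-1}^{(n)}\}$ is bounded in $L^{2}(\rd)$, Banach--Alaoglu yields a further subsequence converging weakly to some $f_{j} \in L^{2}(\rd)$ with $|\langle f_{j}, g_{0}\rangle| \ge \delta_{j}$, so $f_{j} \ne 0$ and $\|f_{j}\|_{L^{2}} \ge \delta_{j}/\|g_{0}\|_{L^{2}}$. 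Define $w_{j}^{(n)} = w_{j-1}^{(n)} - \pi(z_{j}^{(n)}) f_{j}$; by construction $\pi(z_{j}^{(n)})^{*} w_{j}^{(n)} \rightharpoonup 0$, and a diagonal extraction consolidates the countable sequence of sub-extractions into one.

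The principal obstacle is proving the asymptotic orthogonality $|z_{j}^{(n)} - z_{j'}^{(n)}| \to \infty$ whenever both profiles are nonzero. Arguing by contradiction, suppose along some subsequence $z_{j'}^{(n)} - z_{j}^{(n)} \to \bar{z}$. Using the cocycle rule $\pi(a)^{*}\pi(b) = e^{i\Theta(a,b)}\pi(b-a)$ for an explicit phase $\Theta$, I would rewrite $\pi(z_{j'}^{(n)})^{*} w_{j'-1}^{(n)}$ as a phase times a shift by $z_{j}^{(n)} - z_{j'}^{(n)}$ applied to $\pi(z_{j}^{(n)})^{*} w_{j}^{(n)}$, plus intermediate profile contributions indexed by $\ell$ with $j < \ell < j'$. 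By a secondary induction on $j' - j$ (orthogonality is already known for shorter pairs), those intermediate terms contribute weakly to zero; meanwhile, the outer shift converges strongly (after passing to a subsequence making the phase converge) to a fixed unitary, which sends the zero weak limit of $\pi(z_{j}^{(n)})^{*} w_{j}^{(n)}$ to zero. This would force $f_{j'} = 0$, contradicting $|\langle f_{j'}, g_{0}\rangle| \ge \delta_{j'} > 0$.

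The remaining properties then follow routinely. Expanding $\|f^{(n)}\|_{L^{2}}^{2}$ and using that the pairwise cross terms $\langle \pi(z_{j}^{(n)}) f_{j}, \pi(z_{j'}^{(n)}) f_{j'}\rangle = e^{i\theta_{n}}\langle f_{j}, \pi(z_{j'}^{(n)} - z_{j}^{(n)}) f_{j'}\rangle$ vanish in the limit by $C_{0}$-decay of the STFT, together with the weak convergences $\pi(z_{j}^{(n)})^{*} w_{k}^{(n)} \rightharpoonup 0$ just established (which, once orthogonality of centers is in hand, extend to all $k \ge j$ by the same cocycle manipulation), yields $\sum_{j=1}^{k}\|f_{j}\|_{L^{2}}^{2} + \limsup_{n}\|w_{k}^{(n)}\|_{L^{2}}^{2} \le 1$. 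In particular $\|f_{j}\|_{L^{2}} \to 0$; via the lower bound $\|f_{j}\|_{L^{2}} \gtrsim \delta_{j}$, the extraction rule at step $k+1$ forces $\limsup_{n}\|w_{k}^{(n)}\|_{M^{\infty}} \le 2\delta_{k+1} \to 0$, completing the proof.
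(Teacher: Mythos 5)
Your proposal correctly reconstructs the standard iterative profile extraction (weak limits after re-centering, asymptotic orthogonality by a cocycle/induction argument, Pythagorean expansion, and the maximal choice of $\delta_j$ forcing $M^\infty$-smallness of the remainder), which is precisely the construction the paper defers to \cite{AmbConcentration} rather than proving. The only minor imprecision is that the final step implicitly requires taking $\delta_{k+1}$ essentially maximal, i.e.\ comparable to $\limsup_n\|w_k^{(n)}\|_{M^\infty}$, which you use but do not quite state when setting up the extraction.
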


\section{Controlling cross terms} \label{sec-crosst}
    \subsection{An asymptotic formula}
	The following result will play a central role in taming the cross terms arising from profile decompositions for our optimization problem. It is in fact a result of independent interest, since it provides a quantitative control on the $L^p$ concentration on a given phase space domain asymptotically generated by constructive interference of antipodally escaping wave packets. 
	
	\begin{theorem} \label{thm-avgbd}
		Let $f,g\in L^2(\R^d)$, and let $a^{(n)},b^{(n)}\in\R^{2d}$ satisfy
		\[
		d^{(n)} \coloneqq a^{(n)}-b^{(n)},\quad
        %\red{
        |d^{(n)}| \to\infty%}
        ,\qquad
		c^{(n)} \coloneqq \frac{a^{(n)}+b^{(n)}}{2}\to c\in\R^{2d}.
		\]
		Define
		\[
		\mathcal{S}_n(z) \coloneqq W(\pi(a^{(n)})f,\pi(b^{(n)})g)(z)+W(\pi(b^{(n)})g,\pi(a^{(n)})f)(z).
		\]
		For every measurable $\Omega\subset\R^{2d}$ with $0<|\Omega|<\infty$ and $1\le p<\infty$,
		\begin{equation} \label{eq-polar}
			\lim_{n\to\infty}\ \|\mathcal{S}_n\|_{L^p(\Omega)}
			 =
				2\,\cC_p\,\|W(f,g)\|_{L^p(\Omega-c)}\, %\le L (\norm{f}_{L^2}^2 + \norm{g}_{L^2}^2),
		\end{equation}
        and
        \begin{equation}  \label{eq-polar2}
            2\,\cC_p\,\|W(f,g)\|_{L^p(\Omega-c)}\, 
        \le 
            L (\norm{f}_{L^2}^2 + \norm{g}_{L^2}^2),
        \end{equation}
        where $L$ is defined in \eqref{eq-WigSup} and
		\begin{equation} \label{eq-Cp}
		\mathcal{C}_p=\left( \frac{1}{2\pi} \int_0^{2\pi} \abs{\cos(\theta)}^p \dd \theta\right)^{1/p} = \Big(\frac{1}{\pi} \mathrm{B}\Big(\frac{p+1}{2},\frac12\Big)\Big)^{1/p},
		\end{equation} where $\mathrm{B}$ is Euler's beta function. \end{theorem}
\begin{remark}
    When \(p<2\), \(\mathcal{S}_n\) need not be in \(L^p(\R^{2d})\). However, each cross-Wigner term belongs to $L^2(\mathbb{R}^{2d})$, hence also to $L^2(\Omega)\hookrightarrow L^p(\Omega)$ for every $1 \le p \le 2$. As a result, by triangle inequality we have \(\mathcal{S}_n\in L^p(\Omega)\).
\end{remark}
Before delving into the details of the proof, some comments on the heuristics behind this result could be helpful. Setting $\an=(\xn_a,\xin_a)$ and $\bn=(\xn_b,\xin_b)$, we have by covariance (Lemma~\ref{lem-covwig}): 
\begin{align*}
			W(\pi(\an)f,\pi(\bn)g)(z)=e^{\pi i (\xin_a+\xin_b)\cdot(\xn_a-\xn_b)}e^{2\pi i [z,\dn]}W(f,g)(z-\cn).
		\end{align*} Since \(W(g,f)=\overline{W(f,g)}\), after setting \(\den=2\pi\dn\) and \(\exn{\phi}=\pi  (\xin_a+\xin_b)\cdot(\xn_a-\xn_b) + [\cn,\den]\) we get 
\begin{equation}
	\mathcal{S}_n(z)=2\Re(e^{i(\exn{\phi} +[z-\cn,\den])} W(f,g)(z-\cn)). 
\end{equation} In Equation~\eqref{eq-polar}, the symmetric block $\mathcal{S}_n(z)$ accounting for the cross-Wigner interaction between two wave packets can then be viewed as the (the real part of the) product of a carrier plane wave with frequency $\den$ and an envelope $W(f,g)$, both recentered at $\cn$. In fact, expanding the real part after letting \(\alpha(z)\) denote the phase of \(W(f,g)(z)\), we obtain an explicit cosine grating of the envelope: 
\[ \mathcal{S}_n(z) = 2 \abs{W(f,g)(z-\cn)} \cos(\exn{\phi} +[z-\cn,\den] + \alpha(z-\cn)). \]
The picture can be easily reinforced with a toy example in $d=1$, taking Gaussian wave packets $f(t)=g(t)=e^{-\pi |t|^2}$ antipodally travelling with $\an=(r^{(n)},0)$ and $\bn=(-r^{(n)},0)$ for a sequence $\abs{r^{(n)}} \to \infty$, so that $\cn=c=0$ and
\[ \mathcal{S}_n(x,\xi) = 2 \sqrt{2}e^{-2\pi(|x|^2+|\xi|^2)} \cos( -4\pi r^{(n)}\xi).\]
Measuring the $L^p$-energy of $\mathcal{S}_n$ within an observation window $\Omega$ thus exhibits two effects: As $\cn\to c$ the envelope remains within $\Omega-c$, while intensity fringes separated by $\sim 1/|\den|$ become closer and closer as $\abs{\dn} \to \infty$, in fact leading to equidistribution of linear phases in the asymptotic regime (ultimately by the Riemann-Lebesgue lemma). Intuitively speaking, the separation length goes beyond the resolution power of any possible measuring procedure, which in fact eventually records the average $L^p$-brightness, explaining the occurrence of the visibility factor $\cC_p$. This matches the parallelogram-law intuition when $p=2$ and also the expected $L^\infty$ behavior since $\cC_p \to 1$ as $p \to \infty$. 

The second part, Equation~\eqref{eq-polar2} gives an inequality in terms of $L$, hence requires to control the cross-interference energy by a single signal in light of \eqref{eq-WigSup}. This is done in the proof by introducing the functions $h^\pm_\theta = f \pm e^{i\theta}g$ for every $\theta \in [0,2\pi]$, which should not come as a surprise in view of the previous discussion. Indeed, an easy polarization argument yields 
\[ W(h_\theta^+)-W(h_\theta^-)=4\Re\big(e^{-i\theta} W(f,g)\big) = 4 \abs{W(f,g)} \cos(\alpha-\theta), \]
and the claim follows by measuring the $L^p$-energy (i.e., taking the $L^p$ norm) \textit{after} averaging over $\theta$. This is needed to remove the dependence on the (unknown) local phase $\alpha$, concretely, by replacing $|\cos(\alpha-\theta)|^p$ with its angular mean $\mathcal{C}_p^p$. In addition, it is done to make the right-hand side independent of $\theta$, indeed, $\|h_\theta^+\|_2^2+\|h_\theta^-\|_2^2=2(\|f\|_2^2+\|g\|_2^2)$. 

To summarize these remarks, Theorem~\ref{thm-avgbd} shows that two asymptotically separated packets generally interfere via Wigner distribution to contribute to the $L^p$ norm over a given phase space region by an amount that can be determined (roughly speaking: $\sim$ cross-Wigner envelope mass $\times$ averaged visibility of oscillating carrier), and which cannot exceed the optimal value $L$.
 
\begin{proof}[Proof of Theorem~\ref{thm-avgbd}]
With the notation introduced in the previous discussion, the change of variable \(z'=z-\cn\) yields
		\begin{align}
			\lim_{n\to\infty} & \|\Re\left(W(\pi(\an)f,\pi(\bn)g)\right)\|_{L^p(\Omega)}^p \\
            &= \lim_{n\to \infty} \int_{\Omega} \abs{\Re\left(W(\pi(\an)f,\pi(\bn)g)(z)\right)}^p \dd z \\
			& = \lim_{n\to \infty} \int_{\Omega_n} \abs{\Re\left(e^{i(\exn{\phi} +[z',\den])} W(f,g)(z')\right)}^p \dd z'\\
			& = \lim_{n\to \infty} \int_{\Omega_n} \abs{W(f,g)(z')}^p\abs{\cos([z',\den]+\exn{\phi}+\alpha(z'))}^p \dd z',
		\end{align} where we set $\Omega_n=\Omega-\cn$. Therefore, after setting $F(z)=\abs{W(f,g)(z)}^p$ and $H(\theta)=\abs{\cos(\theta)}^p$, our claim reads 
		\begin{equation}
			\lim_{n\to\infty} \int_{\Omega_n} F(z) H([z,\den]+\exn{\phi}+\alpha(z)) \dd z =\Big(\frac{1}{2\pi}\int_0^{2\pi} H(\theta) \dd \theta \Big)\int_{\Omega-c} F(z) \dd z.
		\end{equation} 
		We argue by approximation with trigonometric polynomials as detailed below. 
		
		\noindent \textbf{Step 1.} We first prove that the claim holds when $H(\theta)$ is replaced by a complex exponential \(P(\theta)=e^{ik\theta}\) with $k \in \Z$. 
		
		For \(k=0\) the problem boils down to showing that \begin{align*}
			\lim_{n\to\infty} \int_{\Omega_n} F(z) \dd z= \int_{\Omega-c}F(z) \dd z,
		\end{align*} which follows at once using continuity on $L^1$ of translates of the indicator function $\chi_\Omega$ of $\Omega$ and the bound in \eqref{eq-wignorm-bds}: 
        \begin{equation}
            \abs{\int_{\rdd} F(z) (\chi_{\Omega}(z+\cn)-\chi_\Omega(z+c)) \dd z} \le \norm{F}_{L^\infty} \norm {\chi_\Omega(z+\cn) - \chi_\Omega(z+c)}_{L^1_z} \to 0.
        \end{equation} 

		For \(k\neq 0\) the right-hand side vanishes, so we must show that 
		\begin{equation}
			\lim_{n\to\infty} \int_{\Omega_n} F(z) e^{ik([z,\den]+\exn{\phi}+\alpha(z))} \dd z = 0.
		\end{equation}
		If we now let \(G_n(z)=F(z) e^{ik\alpha(z)}\chi_{\Omega}(z+\cn)\), the integral on the left-hand side is \begin{align*}
			\int_{\Omega_n} F(z)e^{ik([z,\den]+\exn{\phi}+\alpha(z))} \dd z&=e^{ik \exn{\phi}}\int_{\R^{2d}}G_n(z)e^{ik[z,\den]} \dd z\\
			&=e^{ik \exn{\phi}} \widehat{G_n}(-kJ\dn).
		\end{align*}
		Since \(G_n\in L^1(\R^{2d})\) and \(\abs{\dn} \to \infty\) with \(n\), we aim to invoke the Riemann-Lebesgue lemma to conclude that this integral vanishes in the limit. This is not possible at once, since \(G_n\) also depends on \(n\). However, it is enough to introduce the function \(G(z)=F(z) e^{ik\alpha(z)}\chi_{\Omega}(z+c) \) and arguing as before we have \(\|G_n-G\|_{L^1}\to 0\). As a result, we obtain 
		\begin{align*}
			\abs{\int_{\Omega_n} F(z)e^{ik([z,\den]+\exn{\phi}+\alpha(z))} \dd z} &=\left|e^{ik\exn{\phi}}\widehat{G_n}(-kJ\dn)\right|\\&\leq |\widehat{G_n}(-kJ\dn)-\widehat{G}(-kJ\dn)|+|\widehat{G}(-kJ\dn)|
			\\&\leq \|G_n-G\|_{L^1}+|\widehat{G}(-kJ\dn)| \to 0,
		\end{align*} 
		by virtue of the previous remark and the Riemann-Lebesgue lemma. 
		
		Therefore, by linearity we infer for any trigonometric polynomial \(P(\theta)=\sum_{m=-N}^N c_m e^{im\theta}\) that
		\begin{align*}
			\lim_{n\to\infty} \int_{\Omega_n}F(z) P([z,\den]+\exn{\phi}+\alpha(z)) \dd z= \Big(\frac{1}{2\pi}\int_0^{2\pi}P(\theta) \dd \theta \Big) \int_{\Omega-c} F(z) \dd z.
		\end{align*}
		
		\noindent \textbf{Step 2.} Given an \(\varepsilon>0\) we let \(P_{\varepsilon}(\theta)\) be a trigonometric polynomial such that \(\|H-P_\varepsilon\|_{L^\infty(\T)}<\varepsilon\). For simplicity, given a function \(G\) on the circle we will write \(I_n(G)=\int_{\Omega_n}F(z) G([z,\den]+\exn{\phi}+\alpha(z)) \dd z\) and \(\overline{G}= \frac{1}{2\pi} \int_0^{2\pi} G(\theta) \dd \theta\). We now get \begin{multline}
			\abs{I_n(H)-\overline{H} \norm{F}_{L^1(\Omega-c)}} \leq
			\abs{I_n(H)-I_n(P_\varepsilon)}+\abs{I_n(P_\varepsilon)-\overline{P_\varepsilon} \norm{F}_{L^1(\Omega-c)}}\\ +\norm{F}_{L^1(\Omega-c)}\abs{\overline{P_\varepsilon}-\overline{H}}.
		\end{multline}
		For what concerns the first term we have
		\begin{equation}
			|I_n(H)-I_n(P_\varepsilon)| \leq \norm{H-P_\varepsilon}_{L^\infty} \norm{F}_{L^1(\Omega-\cn)} <\varepsilon \norm{F}_{L^1(\Omega-\cn)}.
		\end{equation} Since \(\norm{F}_{L^1(\Omega-\cn)}\to \norm{F}_{L^1(\Omega-c)}\) (cf.\ Step 1), there exists $C>0$ such that  \(|I_n(H)-I_n(P_\varepsilon)|<C\varepsilon\). Moreover, as proved in Step 1, the second term vanishes in the limit and can therefore be made smaller than \(\varepsilon\) by picking \(n\) large enough. The desired result then follows after bounding the third term with $\abs{\overline{P_\varepsilon}-\overline{H}} \le \varepsilon$.
		
		\bigskip
		
		To conclude, we need to prove the inequality 
\begin{equation*}
	2\mathcal{C}_p\|W(f,g)\|_{L^p(\Omega-c)}\leq L(\|f\|_{L^2}^2+\|g\|_{L^2}^2).
\end{equation*} 
For \(\theta\in [0,2\pi]\) we write \(h_\theta^{\pm}=f\pm e^{i\theta}g\), hence
\begin{align*}
			W(h_{\theta}^{\pm})=Wf+Wg\pm 2 \Re\left(e^{-i\theta}W(f,g)\right), 
		\end{align*}
		and thus \begin{align*}
			W(h_{\theta}^{+})-W(h_{\theta}^{-})=4\Re\left(e^{-i\theta}W(f,g)\right).
		\end{align*}
		After raising to the power \(p\) and integrating in $z$ and $\theta$ we get\begin{align*}
			\frac{1}{2\pi}\int_{0}^{2\pi}\|W(h_{\theta}^{+})-W(h_{\theta}^{-})\|_{L^p(\Omega-c)}^p \dd \theta= \frac{1}{2\pi}\int_{0}^{2\pi}\|4\Re\left(e^{-i\theta}W(f,g)\right)\|_{L^p(\Omega-c)}^p \dd \theta.
		\end{align*}
		Let us now treat each side of this equation separately. If \(\alpha(z)\) denotes the phase of the complex number \(W(f,g)(z)\), so that we can write \(W(f,g)(z)=|W(f,g)(z)|e^{i\alpha(z)}\), the right-hand side equals (by Fubini's theorem)
		\begin{equation}
			\frac{4^p}{2\pi}\int_{\Omega-c}\int_{0}^{2\pi} \abs{\cos(\alpha(z)-\theta)}^p \abs{W(f,g)(z)}^p \dd \theta \dd z =4^p\mathcal{C}_p^p \norm{W(f,g)}_{L^p(\Omega-c)}^p.
		\end{equation}
		On the other hand, to estimate the left-hand side we use Minkowski's inequality to get
		\begin{align*}
			\frac{1}{2\pi}\int_{0}^{2\pi}\|W(h_{\theta}^{+})-W(h_{\theta}^{-})\|_{L^p(\Omega-c)}^p \dd \theta&\leq
			\frac{1}{2\pi}\int_{0}^{2\pi}\left(\|W(h_{\theta}^{+})\|_{L^p(\Omega-c)}+\|W(h_{\theta}^{-})\|_{L^p(\Omega-c)}\right)^p \dd \theta
			\\
			&\leq  \frac{1}{2\pi}\int_{0}^{2\pi}L^p\left(\|h_{\theta}^{+}\|_{L^2}^2+\|h_{\theta}^{-}\|_{L^2}^2\right)^p \dd \theta\\
			&=\frac{1}{2\pi}\int_{0}^{2\pi}2^pL^p\left(\|f\|_{L^2}^2+\|g\|_{L^2}^2\right)^p \dd \theta\\ &=2^pL^p\left(\|f\|_{L^2}^2+\|g\|_{L^2}^2\right)^p,
		\end{align*}
		where we also used the invariance of $L$ under shifts in the Problem \eqref{eq-WigSup}, since by covariance \eqref{eq-covwig} we have
        \begin{equation} \norm{Wf}_{L^p(\Omega-c)} = \norm{Wf(\cdot-c)}_{L^p(\Omega)} = \norm{W(\pi(c)f)}_{L^p(\Omega)} \le L \norm{\pi(c)f}_{L^2}^2 = L \norm{f}_{L^2}^2. 
        \end{equation} The claim thus follows. 
\end{proof}

\subsection{Properties of inequality optimizers} 
	By inspecting the proof of Theorem~\ref{thm-avgbd}, we obtain the following characterization of the inequality's (non-trivial) optimizers.
	
	\begin{corollary} \label{cor-optpol}
	Let $f,g \in L^2(\rd) \setminus\{0\}$. The pair $(f,g)$ achieves equality in the corresponding cross-term inequality from Theorem~\ref{thm-avgbd}, i.e.,
	\[
		2\,\cC_p\,\|W(f,g)\|_{L^p(\Omega)}=L(\|f\|_{L^2}^2+\|g\|_{L^2}^2)
	\]
	if and only if the following conditions hold:
		\begin{enumerate}[label=(\roman*)]
            \item The functions $h_\theta^\pm = f \pm e^{i\theta}g$ are optimizers for the problem \eqref{eq-WigSup} for every $\theta \in [0,2\pi]$. 
			\item For every $\theta \in [0,2\pi]$ we have
			\begin{equation}\label{eq-propA}
				\|W(h_{\theta}^+)-W(h_{\theta}^-)\|_{L^p(\Omega)}=\|W(h_{\theta}^+)\|_{L^p(\Omega)}+\|W(h_{\theta}^-)\|_{L^p(\Omega)}.
			\end{equation}
		\end{enumerate}
\end{corollary}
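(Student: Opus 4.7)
My plan is to revisit the final part of the proof of Theorem~\ref{thm-avgbd}, where the inequality $2\cC_p\|W(f,g)\|_{L^p(\Omega)}\le L(\|f\|_{L^2}^2+\|g\|_{L^2}^2)$ is obtained via a short chain of four steps: an opening identity (the angular average of $|\cos(\phi(z)-\theta)|^p$, producing the visibility factor $\cC_p^p$), a pointwise-in-$\theta$ application of Minkowski's inequality in $L^p(\Omega)$, the bound $\|W(h_\theta^\pm)\|_{L^p(\Omega)}\le L\|h_\theta^\pm\|_{L^2}^2$ arising from the very definition of $L$, and finally the parallelogram identity $\|h_\theta^+\|_{L^2}^2+\|h_\theta^-\|_{L^2}^2=2(\|f\|_{L^2}^2+\|g\|_{L^2}^2)$. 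Since the first and fourth steps are equalities, equality in the overall bound forces equality a.e.\ in $\theta\in[0,2\pi]$ in each of the two middle inequalities.

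I will then interpret these pointwise-in-$\theta$ equality conditions. Equality in Minkowski at the second step is precisely condition (i), while equality at the third step splits as $\|W(h_\theta^\pm)\|_{L^p(\Omega)}=L\|h_\theta^\pm\|_{L^2}^2$ for each sign separately (since the two individual upper bounds always hold, equality of their sum forces equality of each summand), which is condition (ii). The converse direction is immediate: assuming (i) and (ii), the chain collapses entirely to equalities and the claim follows.

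The main technical care I need to take is upgrading ``a.e.\ $\theta$'' to ``every $\theta\in[0,2\pi]$'' as required by the statement. For this I will use the (obvious) continuity of $\theta\mapsto h_\theta^\pm=f\pm e^{i\theta}g$ from $[0,2\pi]$ into $L^2(\rd)$, combined with the continuity of $h\mapsto\|W(h)\|_{L^p(\Omega)}$ from $L^2(\rd)$ to $\R$, valid for every $1\le p<\infty$ thanks to Moyal's identity, the uniform bound $\|W(h)\|_{L^\infty}\lesssim\|h\|_{L^2}^2$ recalled in \eqref{eq-wignorm-bds}, and H\"older's inequality on the set $\Omega$ of finite measure. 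Both sides of the identities in (i) and (ii) then depend continuously on $\theta$, so that equality on a dense subset propagates to the whole interval. I do not foresee any serious obstruction: this is essentially an ``equality in the proof'' bookkeeping argument.
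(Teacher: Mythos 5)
Your proposal is correct and follows exactly the route the paper intends when it writes ``by inspecting the proof of Theorem~\ref{thm-avgbd}'': identify the two $\theta$-pointwise inequalities (the triangle inequality in $L^p(\Omega)$ and the bound $\|W(h_\theta^\pm)\|_{L^p(\Omega)}\le L\|h_\theta^\pm\|_{L^2}^2$), note that equality in the averaged chain forces equality a.e.\ in $\theta$ in each step, and then use continuity of $\theta\mapsto\|W(h_\theta^\pm)\|_{L^p(\Omega)}$ (which you correctly justify via the $L^2$ and $L^\infty$ bounds in \eqref{eq-wignorm-bds} together with finiteness of $|\Omega|$) to upgrade to every $\theta$. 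Your explicit treatment of the a.e.\ to everywhere upgrade is the right technical point, which the paper leaves implicit.
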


It is clear that simultaneous occurrence of these conditions place strong constraints on the structure of non-trivial inequality extremizers. Indeed, we illustrate below some of those that can be easily inferred. 

\begin{corollary}\label{cor-Xzero}
        Let $f,g \in L^2(\rd)\setminus\{0\}$ be optimizers for the inequality \eqref{eq-polar2}. Then \eqref{eq-propA} implies $Wf(z)=-Wg(z)$ for almost every \(z\in \Omega\).
\end{corollary}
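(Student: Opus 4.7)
The plan is to extract a pointwise sign constraint from the Minkowski equality case encoded in \eqref{eq-propA}, and then exploit the freedom in the phase parameter $\theta$ to eliminate the cross-Wigner term.

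First I would recast \eqref{eq-propA} as an equality in the triangle inequality for the real-valued $L^p(\Omega)$ norm applied to $A_\theta \coloneqq W(h_\theta^+)$ and $B_\theta \coloneqq -W(h_\theta^-)$; namely $\|A_\theta + B_\theta\|_{L^p(\Omega)} = \|A_\theta\|_{L^p(\Omega)} + \|B_\theta\|_{L^p(\Omega)}$. For $p=1$ this forces $A_\theta(z) B_\theta(z) \ge 0$ a.e.\ on $\Omega$ directly, while for $1<p<\infty$ strict convexity of $L^p$ upgrades the equality case to pointwise non-negative proportionality $A_\theta = c_\theta B_\theta$ (with $c_\theta \ge 0$, or one of the two is identically zero on $\Omega$); in either range of $p$ we recover the common sign condition
\[
W(h_\theta^+)(z)\, W(h_\theta^-)(z) \le 0 \qquad \text{for a.e.\ } z \in \Omega.
\]

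Next I would use the polarization identity already exploited in the proof of Theorem~\ref{thm-avgbd}, namely $W(h_\theta^\pm) = Wf + Wg \pm 2 R_\theta$ with $R_\theta(z) \coloneqq \operatorname{Re}\bigl(e^{-i\theta} W(f,g)(z)\bigr)$, to rewrite the product as a difference of squares:
\[
W(h_\theta^+)(z)\, W(h_\theta^-)(z) = \bigl(Wf(z) + Wg(z)\bigr)^2 - 4\, R_\theta(z)^2.
\]
Combined with the sign condition above, this gives $(Wf+Wg)^2(z) \le 4 R_\theta(z)^2$ a.e.\ on $\Omega$, for every fixed $\theta \in [0,2\pi]$.

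To finish, I would upgrade this family of a.e.\ inequalities (one per $\theta$) to a joint statement holding a.e.\ in $z$ \emph{uniformly} in $\theta$: take a countable dense set of phases, discard the countable union of the corresponding null sets, and invoke the continuity of $\theta \mapsto R_\theta(z)$ to extend the bound to every $\theta \in [0,2\pi]$. At each surviving $z \in \Omega$ I would then choose $\theta(z)$ so that $e^{-i\theta(z)} W(f,g)(z)$ is purely imaginary (which is always possible, and trivially so if $W(f,g)(z) = 0$), which forces $R_{\theta(z)}(z) = 0$ and hence $(Wf+Wg)(z)^2 \le 0$, i.e.\ $Wf(z) = -Wg(z)$. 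The one point that needs a moment's care is handling $p=1$ and $p>1$ uniformly, but as noted the pointwise sign condition is a common consequence of both equality regimes and is all the argument actually uses.
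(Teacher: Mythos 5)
Your proof is correct and follows the same essential route as the paper: polarize via $W(h_\theta^\pm)=(Wf+Wg)\pm 2\Re(e^{-i\theta}W(f,g))$, exploit the equality case of Minkowski's inequality encoded in \eqref{eq-propA}, and then choose $\theta=\theta(z)$ to annihilate the cross term. Where you genuinely diverge is in the case analysis: the paper treats $p=1$ and $p>1$ separately, deducing for $p=1$ a pointwise proportionality with a measurable $\lambda_\theta(z)\ge 0$ and invoking the real-number identity $|a+b|+|a-b|=2\max\{|a|,|b|\}$ to obtain $|Y_\theta|\ge|X|$, and for $p>1$ a constant $\lambda_\theta$ and the linear relation $X=\tfrac{\lambda_\theta-1}{\lambda_\theta+1}Y_\theta$. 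You instead extract the single sign condition $W(h_\theta^+)(z)\,W(h_\theta^-)(z)\le 0$ common to both regimes and factor it as a difference of squares, $(Wf+Wg)^2\le 4R_\theta^2$, which collapses the two cases into one algebraic step; this is a modest but real streamlining. You are also explicit about upgrading the family of a.e.\ inequalities indexed by $\theta$ to a joint a.e.\ statement valid for \emph{all} $\theta$ simultaneously, via a countable dense set of phases plus continuity of $\theta\mapsto R_\theta(z)$. The paper uses the same maneuver implicitly (choosing $\theta_z$ depending on $z$) without comment, so making it explicit is a genuine tightening of the argument rather than a different idea.
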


%\red{
\begin{proof} Recall that $h_\theta^\pm = f\pm e^{i\theta}g$, $\theta \in [0,2\pi]$. We write \(X=Wf+Wg\) and \(Y_\theta=2\Re(e^{-i\theta}W(f,g))\), so that $W(h_\theta^\pm) = X \pm Y_\theta$, and the claim amounts to showing that $X=0$ a.e.\ on $\Omega$. 

Equality in Minkowski's inequality as in \eqref{eq-propA} for $1\le p<\infty$ implies that, for every $\theta \in [0,2\pi]$, 
\[ \|W(h_{\theta}^+)-W(h_{\theta}^-)\|_{L^p(\Omega)} = \| |W(h_{\theta}^+)|+|W(h_{\theta}^-)|\|_{L^p(\Omega)},\]
from which we infer the pointwise identity
\[ |W(h_{\theta}^+)(z)-W(h_{\theta}^-)(z)|=|W(h_{\theta}^+)(z)|+|W(h_{\theta}^-)(z)| \qquad \text{for a.e. }z \in \Omega.  \]
Let $D \subset [0,2\pi]$ be a countable dense subset and for each $\theta\in D$ let $N_\theta\subset\Omega$ be a null set outside which the above identity holds. We also introduce the full-measure subset $E= \Omega\setminus\bigcup_{\theta\in D}N_\theta$ so that for every $z \in E$ the identity holds for every $\theta \in D$. 

Expanding the Wigner distributions and since $X,Y_\theta$ are real-valued, this amounts to
   \begin{equation}
    2|Y_\theta(z)|=|X(z)+Y_\theta(z)|+|X(z)-Y_\theta(z)| = 2 \max\{ |X(z)|, |Y_\theta(z)|\}\qquad \forall\ z \in E, \, \forall\ \theta \in D,
    \end{equation}
    from which we deduce that \(|Y_\theta(z)|\geq |X(z)|\) for all $z \in E$ and $\theta \in D$. On the other hand, setting $W(f,g)(z)=\abs{W(f,g)(z)}e^{i\alpha(z)}$, by continuity we have
    \begin{equation}
        |X(z)|\le \inf_{\theta \in D}|Y_\theta(z)|=2|W(f,g)(z)| \, \inf_{\theta \in D} \abs{\cos(\alpha(z)-\theta)}=0, \quad \forall z \in E,
    \end{equation} hence we infer $X(z)=0$ for a.e.\ $z \in \Omega$ as claimed. 
   \end{proof}%}

\begin{corollary}\label{cor-ortho}
Let $f,g \in L^2(\rd)\setminus\{0\}$ be optimizers for the inequality \eqref{eq-polar2}. Then $\langle f,g \rangle =0$. 
\end{corollary}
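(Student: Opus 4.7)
The plan is to combine the two conditions at our disposal: Corollary \ref{cor-optpol}(ii), asserting that every $h_\theta^\pm = f \pm e^{i\theta}g$ is itself an optimizer for \eqref{eq-WigSup}, and Corollary \ref{cor-Xzero}, asserting that $X \coloneqq Wf + Wg$ vanishes almost everywhere on $\Omega$. Writing $Y_\theta = 2\mathrm{Re}(e^{-i\theta}W(f,g))$ as in the proof of Corollary \ref{cor-Xzero}, polarization gives $W(h_\theta^\pm) = X \pm Y_\theta$, and the vanishing of $X$ on $\Omega$ collapses this to the pointwise identity $W(h_\theta^+)(z) = -W(h_\theta^-)(z)$ for a.e.\ $z \in \Omega$ and every $\theta \in [0,2\pi]$. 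In particular, $\|W(h_\theta^+)\|_{L^p(\Omega)} = \|W(h_\theta^-)\|_{L^p(\Omega)}$.

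Using that both $h_\theta^\pm$ saturate the bound $\|Wh\|_{L^p(\Omega)} = L\|h\|_{L^2}^2$, the above equality transfers to the $L^2$-norms, yielding $\|h_\theta^+\|_{L^2}^2 = \|h_\theta^-\|_{L^2}^2$ for every $\theta$. Expanding by polarization,
\[
\|h_\theta^\pm\|_{L^2}^2 = \|f\|_{L^2}^2+\|g\|_{L^2}^2 \pm 2\mathrm{Re}(e^{-i\theta}\langle f,g\rangle),
\]
this forces $\mathrm{Re}(e^{-i\theta}\langle f,g\rangle) = 0$ for every $\theta \in [0,2\pi]$, hence $\langle f,g\rangle = 0$.

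The only technical point worth isolating --- and in my view the sole potential obstacle --- is the degenerate situation where $h_{\theta_0}^+ = 0$ for some $\theta_0$ (equivalently, $f = -e^{i\theta_0}g$, i.e.\ $f$ and $g$ collinear in $L^2$), since in that case the optimizer relation $\|Wh_{\theta_0}^+\|_{L^p(\Omega)} = L\|h_{\theta_0}^+\|_{L^2}^2$ is vacuous. I would rule this out as follows: $W(h_{\theta_0}^+) \equiv 0$ globally, combined with $X = 0$ on $\Omega$, gives $Y_{\theta_0} = 0$ on $\Omega$ and hence $W(h_{\theta_0}^-) = 0$ on $\Omega$; then the optimality of $h_{\theta_0}^-$, together with $L > 0$ (witnessed by testing Gaussian profiles, for which $Wf$ is strictly positive on $\rdd$), forces $\|h_{\theta_0}^-\|_{L^2} = 0$, which together with $h_{\theta_0}^+ = 0$ contradicts $f,g \ne 0$. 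Thus $h_\theta^\pm \ne 0$ for every $\theta$, and the main argument goes through unconditionally.
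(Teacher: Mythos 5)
Your argument is correct and follows essentially the same route as the paper's proof: use Corollary~\ref{cor-Xzero} to collapse $W(h_\theta^\pm)$ to $\pm Y_\theta$ on $\Omega$, equate the two $L^p(\Omega)$ norms, transfer to $L^2$ norms via the optimality of $h_\theta^\pm$ and $L>0$, and deduce $\mathrm{Re}(e^{-i\theta}\langle f,g\rangle)=0$ for all $\theta$. The only difference is your explicit treatment of the degenerate case $h_{\theta_0}^\pm=0$ (which the paper leaves implicit), and that added check is sound.
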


\begin{proof}
    From Corollary~\ref{cor-Xzero} we know that \(Wf=-Wg\) almost everywhere on \(\Omega\), hence for every $\theta \in [0,2\pi]$ we have
    \begin{align*}
        \|W(f+e^{i\theta}g)\|_{L^p(\Omega)}&=\|Wf+Wg+2\Re(e^{-i\theta}W(f,g))\|_{L^p(\Omega)} \\& =\|2\Re(e^{-i\theta}W(f,g))\|_{L^p(\Omega)}\\
        &=\|-2\Re(e^{-i\theta}W(f,g))\|_{L^p(\Omega)}=\|W(f-e^{i\theta}g)\|_{L^p(\Omega)}.
    \end{align*}
    Furthermore since both \(f+e^{i\theta}g\) and \(f+e^{i(\theta+\pi)}g=f-e^{i\theta} g\) are optimizers, this yields
    \begin{align*}
        L\|f+e^{i\theta}g\|_{L^2}^2=\|W(f+e^{i\theta}g)\|_{L^p(\Omega)}=\|W(f-e^{i\theta}g)\|_{L^p(\Omega)}=L\|f-e^{i\theta}g\|_{L^2}^2.
    \end{align*}
    Since \(L> 0\) we conclude that \( \|f+e^{i\theta}g\|_{L^2}^2=\|f-e^{i\theta}g\|_{L^2}^2\) for every $\theta \in [0,2\pi]$. By expanding, we see that \begin{align*}
        \|f\pm e^{i\theta}g\|_{L^2}^2=\|f\|_{L^2}^2+\|g\|_{L^2}^2\pm 2\Re\langle f, e^{i\theta} g\rangle,
    \end{align*} so it must be that
        \begin{equation}
        \Re (e^{-i\theta}\langle f, g\rangle) = \Re(\langle f,g \rangle)\cos\theta+\Im(\langle f,g \rangle) \sin \theta = 0, \qquad \forall \theta  \in [0,2\pi],
    \end{equation} which clearly forces $\langle f,g\rangle=0$. 
\end{proof}

\subsection{Failure of semicontinuity}
The persistence of the cross-Wigner block $\cS_n$ when the two packets $\pi(a^{(n)})f$ and $\pi(b^{(n)})g$ separate antipodally in phase space while their midpoint remains bounded is a major obstruction to a direct-method proof of the existence of an optimizer. Indeed, this causes both the lack of sequential weak upper semicontinuity, as we now prove more explicitly, and the non relative compactness of a maximizing sequence.

\begin{prop}\label{prop:wuSCfail}
Let $\Omega\subset\R^{2d}$ be measurable with $0<|\Omega|<\infty$, and $1\le p<\infty$. The functional
\[
J(f)\coloneqq \|Wf\|_{L^p(\Omega)},\qquad f\in L^2(\R^d),
\]
is not sequentially weakly upper semicontinuous on $L^2(\R^d)$ at any point. 
\end{prop}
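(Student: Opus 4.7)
My plan is to construct, for an arbitrary $f_0\in L^2(\R^d)$, a sequence $f_n\rightharpoonup f_0$ with $\liminf_{n\to\infty}J(f_n)>J(f_0)$, which directly rules out sequential weak upper semicontinuity at $f_0$. The construction exploits the very mechanism flagged right before the statement: antipodally escaping wave packets generate surviving cross-Wigner interference over $\Omega$, and Theorem~\ref{thm-avgbd} quantifies exactly how much. Concretely, fix a Gaussian $h\in\cS(\R^d)\setminus\{0\}$ so that $Wh$ is strictly positive on $\rdd$ (in particular $\|Wh\|_{L^p(\Omega)}>0$), choose any sequence $a^{(n)}\in\rdd$ with $|a^{(n)}|\to\infty$ and a parameter $\lambda>0$ to be fixed later, and set
\[
  f_n \;\coloneqq\; f_0 \;+\; \lambda\,\pi(a^{(n)})h \;+\; \lambda\,\pi(-a^{(n)})h.
\]
The weak convergence $f_n\rightharpoonup f_0$ is immediate from the standard fact that $\pi(z)h\rightharpoonup 0$ in $L^2(\R^d)$ as $|z|\to\infty$.

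Next I would expand $Wf_n$ by bilinearity and organize the result as
\[
  Wf_n \;=\; Wf_0 \;+\; R_n \;+\; \cS_n \;+\; E_n,
\]
where $\cS_n$ is the symmetric antipodal cross block $W(\lambda\pi(a^{(n)})h,\lambda\pi(-a^{(n)})h)+W(\lambda\pi(-a^{(n)})h,\lambda\pi(a^{(n)})h)$ of Theorem~\ref{thm-avgbd}, $R_n=\lambda^{2}(W\pi(a^{(n)})h+W\pi(-a^{(n)})h)$ collects the two self-Wigner terms of the shifted packets, and $E_n$ gathers the four mixed cross-Wigner terms between $f_0$ and $\pi(\pm a^{(n)})h$. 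A direct application of the covariance identity in Lemma~\ref{lem-covtau} shows that the moduli of the pieces in $R_n$ and $E_n$ are translates of $|Wh|$, $|W(f_0,h)|$ and $|W(h,f_0)|$ by vectors of norm $|a^{(n)}|$ or $|a^{(n)}|/2$, all diverging. Since these cross-Wigners belong to $C_0(\rdd)$ and $|\Omega|<\infty$, a routine truncation argument yields $\|R_n\|_{L^p(\Omega)}+\|E_n\|_{L^p(\Omega)}\to 0$.

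Finally I would invoke Theorem~\ref{thm-avgbd} with $f=g=\lambda h$ and $b^{(n)}=-a^{(n)}$ (so that the midpoint is $c=0$) to obtain
\[
  \|\cS_n\|_{L^p(\Omega)} \;\longrightarrow\; 2\lambda^{2}\,\cC_p\,\|Wh\|_{L^p(\Omega)},
\]
and combine this with the reverse triangle inequality
\[
  \|Wf_n\|_{L^p(\Omega)} \;\ge\; \|\cS_n\|_{L^p(\Omega)} - \|Wf_0\|_{L^p(\Omega)} - \|R_n\|_{L^p(\Omega)} - \|E_n\|_{L^p(\Omega)},
\]
which upon passing to $\liminf_{n\to\infty}$ gives
\[
  \liminf_{n\to\infty} J(f_n) \;\ge\; 2\lambda^{2}\,\cC_p\,\|Wh\|_{L^p(\Omega)} \;-\; J(f_0).
\]
Since $\cC_p,\|Wh\|_{L^p(\Omega)}>0$, choosing $\lambda>0$ sufficiently large makes the right-hand side strictly larger than $J(f_0)$, which is the desired contradiction. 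The only point requiring any care is the vanishing of $R_n$ and $E_n$ in $L^p(\Omega)$, which combines the $C_0$-decay of the relevant cross-Wigners with the finite measure of $\Omega$ via a standard truncation; all the genuinely nontrivial content has already been packaged in Theorem~\ref{thm-avgbd}, so the rest is essentially a one-line reverse-triangle estimate.
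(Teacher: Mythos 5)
Your construction is correct and is essentially the paper's proof, just compressed into a single step. The paper first exhibits failure at $0$ by normalizing the antipodal pair $v_n=\pi(a^{(n)})g+\pi(b^{(n)})g$ to $u_n=v_n/\|v_n\|_{L^2}$, and then perturbs an arbitrary $f$ by $\lambda u_n$; you skip the normalization and directly perturb $f_0$ by $\lambda(\pi(a^{(n)})h+\pi(-a^{(n)})h)$. In both cases the key ingredients are identical: the $C_0$-decay and covariance to kill the drifting self- and mixed-cross terms, Theorem~\ref{thm-avgbd} with $c^{(n)}=0$ to evaluate the surviving antipodal block, and the reverse triangle inequality together with a large $\lambda$ to overcome $J(f_0)$.
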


%\red{
\begin{proof} 
Consider $g(y)=e^{-\pi |y|^2}$, so that $Wg>0$ on $\R^{2d}$. Let $(\xi_n)\subset\R^d$ be a sequence such that $|\xi_n|\to\infty$ and set
\[
a^{(n)}\coloneqq (0,\xi_n),\qquad b^{(n)}\coloneqq (0,-\xi_n), \qquad v_n\coloneqq \pi(a^{(n)})g+\pi(b^{(n)})g.
\]
As a straightforward consequence of the Riemann-Lebesgue lemma we have $v_n\rightharpoonup 0$ in $L^2(\rd)$ and
\[
\|v_n\|_{L^2}^2  =2\|g\|_{L^2}^2+2\Re\int_{\R^d} \abs{g(y)}^2e^{4\pi i \xi_n\cdot y}\,dy \to 2\|g\|_{L^2}^2. 
\]
By sesquilinearity of $W$ we also have 
\[
W(v_n) = W(\pi(a^{(n)})g)+W(\pi(b^{(n)})g)  +\cS_n,
\]
with the symmetric cross-block 
\[
\cS_n \coloneqq W\big(\pi(a^{(n)})g,\pi(b^{(n)})g\big) +W\big(\pi(b^{(n)})g,\pi(a^{(n)})g\big).
\]
We now claim that we have
\[
\|W(\pi(a^{(n)})g)\|_{L^p(\Omega)}\to 0,\qquad \|W(\pi(b^{(n)})g)\|_{L^p(\Omega)}\to 0,
\]
Indeed, if \(\Omega\) is compact then by covariance (Lemma~\ref{lem-covwig}) we have 
\[\|W(\pi(a^{(n)})g)\|_{L^p(\Omega)}=\|Wg\|_{L^p(\Omega-a^{(n)})}\leq |\Omega|^{1/p}\sup_{z\in \Omega-a^{(n)}}|Wg(z)|.\]
Since \(\Omega\) is compact, thus bounded, and \(Wg\) has \(C_0\)-decay, the supremum in the above line vanishes as \(n\to \infty\), since \(|a^{(n)}|\to \infty\), and likewise for \(b^{(n)}\). For a general finite measure set \(\Omega\) we use inner regularity to pick a compact set \(K\subset\Omega\) such that \(|\Omega\setminus K|<\varepsilon\) and combine the argument for compact sets with the triangle inequality.

Now, if we set
\[
\varepsilon_n\coloneqq \|W(\pi(a^{(n)})g)\|_{L^p(\Omega)}+\|W(\pi(b^{(n)})g)\|_{L^p(\Omega)}\to0,
\]
the triangle inequality gives 
\[
\|\cS_n\|_{L^p(\Omega)}-\varepsilon_n
 \le \|W(v_n)\|_{L^p(\Omega)} \le
\|\cS_n\|_{L^p(\Omega)}+\varepsilon_n.
\]
By Theorem~\ref{thm-avgbd} with $d^{(n)}=(0,2\xi_n)$ and $c^{(n)}=0$, we have $\|\cS_n\|_{L^p(\Omega)}\ \longrightarrow\ 2\,\cC_p\,\|Wg\|_{L^p(\Omega)}$, hence
\[
\|W(v_n)\|_{L^p(\Omega)}\ \longrightarrow\ B \coloneqq 2\,\cC_p\,\|Wg\|_{L^p(\Omega)}>0  = \|W(0)\|_{L^p(\Omega)} = J(0),
\]
which proves that $J(f)=\|Wf\|_{L^p(\Omega)}$ is not sequentially weakly upper semicontinuous at $0$.

Let us now consider an arbitrarily chosen $f \in L^2(\rd)$, and for $\lambda>0$ set $g_{n,\lambda}=f+\lambda v_n$, so that $g_{n,\lambda}\rightharpoonup f$. We have
\begin{equation}\label{eq-Wasymptotic}
    W(g_{n,\lambda})=Wf+\lambda^2W(v_n)+2\lambda\Re W(f,v_n).
\end{equation}
Again by covariance, $C_0$-decay and regularity we have
\[
\|W(f,v_n)\|_{L^p(\Omega)}
\le \|W(f,g)\|_{L^p(\Omega-a^{(n)}/2)}+\|W(f,g)\|_{L^p(\Omega-b^{(n)}/2)}\to 0.
\]
By~\eqref{eq-Wasymptotic} $\|W(v_n)\|_{L^p(\Omega)}\to B>0$, hence by the triangle inequality we obtain
\[
\liminf_{n\to\infty}\|W(g_{n,\lambda})\|_{L^p(\Omega)}\ \ge\ \lambda^2 B-\|Wf\|_{L^p(\Omega)}.
\]
Choosing $\lambda$ large enough so that $\lambda^2 B>2\|Wf\|_{L^p(\Omega)}$ yields
$\limsup_{n}\|W(g_{n,\lambda})\|_{L^p(\Omega)}>\|Wf\|_{L^p(\Omega)}$, proving failure of sequential weak upper semicontinuity at $f$.
\end{proof}
%}

\section{Proof of Theorem~\ref{BigTheorem}}\label{sec-proofbig}
First of all, it is clear that since $|\Omega|>0$ we have $L>0$. Moreover, by \eqref{eq-wignorm-bds} we obtain  \(\left(\int_{\Omega}|Wf(z)|^p \dd z\right)^{1/p}\leq 2^d|\Omega|^{1/p}\|f\|_{L^2}^2\), from which we infer that \(L<\infty\). 

\subsection{Profile decomposition}
Let now \(\fn\) be a maximizing sequence for the problem \eqref{eq-WigSup}, and assume without loss of generality that \(\|\fn\|_{L^2}=1\) for all \(n\). We perform profile decomposition as detailed in Lemma~\ref{lem-profdec}, passing to subsequences wherever needed, and write for simplicity \(\Fkn=\sum_{j=1}^k \pi(\zjn)f_j\), so that \(\fn=\Fkn + \wkn \). By sesquilinearity and covariance of the Wigner distribution we have
\begin{align} \label{eq-wigprofdec}
    W(\fn)=&\sum_{j=1}^k W(\pi(\zjn)f_j)+\sum_{1\leq j\neq j' \leq k} W(\pi(\zjn)f_j,\pi(\zjpn)f_{j'})\\
    &+W(\Fkn,\wkn)+W(\wkn,\Fkn)+W(\wkn)\\
    =&\sum_{j=1}^k W(f_j)(\cdot-\zjn)+2\sum_{1\leq j<j' \leq k} \Re \left(W(\pi(\zjn)f_j,\pi(\zjpn)f_{j'})\right)\\
    &+W(\Fkn,\wkn)+W(\wkn,\Fkn)+W(\wkn).
\end{align}
Let us analyze this expression term by term.

\subsection*{Remainder terms}
We first deal with the three remainder terms containing \(\wkn\), and show that the term \(W(\Fkn,\wkn)\) vanishes in the limit by arguing as in \cite{AmbConcentration}. Similar arguments apply to the other remainder terms with obvious modifications. 

We resort to \eqref{eq-WL2Linfty-L2K} and Theorem~\ref{thm-embed} to obtain, for any compact \(K\subset \R^{2d}\),
\begin{equation}
    \|W(\Fkn,\wkn)\|_{L^2(K)} \lesssim_K \|W(\Fkn,\wkn)\|_{W(L^2,L^\infty)}\lesssim \|\Fkn\|_{L^2}\|\wkn\|_{M^\infty}.
\end{equation}
We emphasize that the implied constants are independent of \(n\) and \(k\), but in general may depend on $K$. Thus, since by Lemma~\ref{lem-profdec} we can choose $n$ sufficiently large to ensure $\norm{\Fkn}_{L^2}^2+\norm{\wkn}_{L^2}^2 \le 2$, we conclude that
\begin{align*}
     \|W(\Fkn,\wkn)\|_{L^2(K)}\lesssim_K \|\wkn\|_{M^\infty}.
\end{align*} Combining with the trivial bound 
\begin{equation}\label{eq-unifb-wig}
    \|W(\Fkn,\wkn)\|_{L^\infty} \leq 2^d \|\Fkn\|_{L^2}\|\wkn\|_{L^2} \le 2^{d+1},
\end{equation} by H\"older's inequality we get
\begin{align*}
    \|W(\Fkn,\wkn)\|_{L^p(K)}\lesssim_K \|\wkn\|_{M^\infty}^{\theta_p}, \qquad \theta_p=\begin{cases}
    1 & 1 \le p \le 2 \\ \frac{2}{p} & p\ge 2. 
\end{cases}
\end{align*}
This bound suffices to prove that 
\begin{equation}
    \lim_{k \to \infty} \limsup_{n\to \infty} \norm{W(\Fkn,\wkn)}_{L^p(\Omega)} = 0. 
\end{equation} Indeed, if $\Omega$ is compact the conclusion is immediate from the previous argument. If not, it is enough to exhaust by a compact $K \subset \Omega$ with $\abs{\Omega \setminus K}$ arbitrarily small and resort to \eqref{eq-unifb-wig}. 

\subsection*{Structure of profile decomposition}
Let us now examine the main terms appearing in the profile decomposition \eqref{eq-wigprofdec}. 

\subsubsection*{A single compact profile} Since \(|\zjn-\zjpn|\to \infty\) for all \(j\neq {j'}\) by construction, it is easy to argue by contradiction that \(|\zjn|\to \infty\) for all but at most one \(j\), which we denote by \(j_0\) in the case it does exist. Since \(Wf\in C_0(\R^{2d})\) for all \(f\in L^2(\R^d)\), we have, by covariance and inner regularity, \(\|W(f_j)(\cdot - \zjn)\|_{L^p(\Omega)}\to 0\) as \(n\to \infty\) for every \(j\neq j_0\). So, there is at most one potentially surviving pure profile $f_{j_0}\in L^2(\R^d)\setminus\{0\}$, from now on referred to as the \textit{compact profile}, corresponding (up to subsequences) to the bounded, converging sequence $\exn{z}_{j_0} \to z_{j_0} \in \rdd$.  We conveniently set \(f_{j_0}=0\) and \(z_{j_0}^{(n)}\equiv 0\) in the case where the index $j_0$ with the aforementioned properties does not exist.

\subsubsection*{Cross terms and their interactions}
Note that the Wigner covariance relation implies 
\begin{align*}
    \left|W(\pi(\zjn)f_j,\pi(\zjpn)f_{j'})(z)\right|=\left|W(f_j,f_{j'})\left(z-\frac{\zjn+\zjpn}{2}\right)\right|.
\end{align*}
Just like for the pure terms, only the profile pairs for which \(\zjn+\zjpn\) remains bounded as \(n\to \infty\) avoid vanishing in the limit. We will refer to those as \textit{surviving pairs}. 

In fact, if \(\zjn+\zjpn\) remains bounded, then neither \(j\) nor \(j'\) can associate with the compact profile. Indeed, neither \(\zjn\) nor \(\zjpn\) can remain bounded with \(n\), since in that case \(|\zjn-\zjpn|\leq |\zjn+\zjpn|+2\min\{|\zjn|,|\zjpn|\}\) would remain bounded as well. Similarly, neither \(j\) nor \(j'\) can appear in a different surviving pair: for instance, if we also had that \(\zjn+\zln\) remained bounded, then we would have \(|\zjpn-\zln|\leq |\zjpn+\zjn|+|\zjn+\zln|<\infty\) in the limit, another contradiction. 

It is thus natural to introduce the set $\cJ$ consisting of pairs of indices $(j,j')$ with $j<j'$ in the profile decomposition of $\fn$ such that $|\zjn+\zjpn|$ remains bounded, and therefore converges up to subsequence.

\subsection{The main argument}
The previous discussion shows that when we take the limit in \(n\) in \eqref{eq-wigprofdec} the only profiles that eventually contribute are the compact profile \(f_{j_0}\), if it exists, and surviving pairs \((f_j,f_{j'})\) with $(j,j') \in \cJ$. Since profile decomposition holds for any \(k\) we can take the limit, first in \(n\), then in \(k\). By combining our observations about the surviving terms in the expansion of the Wigner distribution, after passing to a common subsequence over $\cJ$ via a diagonal argument we obtain
	\begin{equation}
		L \le \lim_{n\to\infty} \norm{W(\pi(z^{(n)}_{j_0})f_{j_0})}_{L^p(\Omega)} + \lim_{n \to \infty} \sum_{{(j,j')\in \cJ}}  \norm{\cS_{j,j'}^{(n)}}_{L^p(\Omega)},
	\end{equation} where we set $\cS_{j,j'}^{(n)}=2\Re \left(W(\pi(\zjn)f_j,\pi(\zjpn)f_{j'})\right)$ (cf.\ notation in Theorem~\ref{thm-avgbd}). It is clear that 
	\begin{equation} \lim_{n\to\infty}\|W(\pi(z_{j_0}^{(n)})f_{j_0})\|_{L^p(\Omega)}\leq \lim_{n\to\infty}L\|\pi(z_{j_0}^{(n)})f_{j_0}\|_{L^2}^2=L\|f_{j_0}\|_{L^2}^2,
	\end{equation} while invoking Theorem~\ref{thm-avgbd} yields
	\begin{equation}
		\lim_{n \to \infty} \norm{\cS_{j,j'}^{(n)}}_{L^p(\Omega)} = 2\cC_p \norm{W(f_j,f_{j'})}_{L^p(\Omega-c_{j,j'})} \le L(\norm{f_j}_{L^2}^2+\norm{f_{j'}}_{L^2}^2), 
	\end{equation} where $c_{j,j'} \in \rdd$ is the limit of (a subsequence of) $c^{(n)}_{j,j'}$, which is bounded since $(j,j')\in \cJ$. To sum up, we have obtained 
	\begin{align}
		L &  \le \lim_{n\to\infty} \norm{W(\pi(z^{(n)}_{j_0})f_{j_0})}_{L^p(\Omega)}+\sum_{{(j,j')\in \cJ}} \lim_{n \to \infty} \norm{\cS^{(n)}_{j,j'}}_{L^p(\Omega)} \\
		& \le  L\Big(\|f_{j_0}\|_{L^2}^2+\sum_{{(j,j')\in \cJ}}\|f_j\|_{L^2}^2+\|f_{j'}\|_{L^2}^2\Big) \\ 
		& \leq L\sum_{j=1}^\infty \|f_j\|_{L^2}^2 \\
		&\le L.
	\end{align} Note that in the second inequality we crucially used the matching structure of the surviving pairs, which avoids double counting. It is thus clear that every inequality in this chain must be in fact an equality. In particular, we get 
	\begin{equation}
		\|f_{j_0}\|_{L^2}^2+\sum_{{(j,j') \in \cJ}} \Big(\|f_j\|_{L^2}^2+\|f_{j'}\|_{L^2}^2\Big) = 1. 
	\end{equation}
	Now if \(f_{j_0}\neq 0\) then \(\lim_{n\to \infty} \|W(\pi(z^{(n)}_{j_0})f_{j_0})\|_{L^p(\Omega)}=L\|f_{j_0}\|_{L^2}^2\). Since \(z_{j_0}^{(n)}\) converges (up to subsequence) to \( z_0 \in \rdd\) we get, by employing the same argument as in the proof of Theorem~\ref{thm-avgbd},
	\begin{equation}
		\lim_{n\to \infty} \|W(\pi(z_{j_0}^{(n)})f_{j_0})\|_{L^p(\Omega)}=\|Wf_{j_0}\|_{L^p(\Omega-z_0)}=\|W(\pi(z_0)f_{j_0})\|_{L^p(\Omega)},
	\end{equation}
	hence showing that \(\pi(z_0)f_{j_0}\) is an optimizer. 
	
	On the other hand, if no compact profile exists we must examine the cross terms. Since every inequality is an equality in the above chain, for each pair $(j,j') \in \cJ$ we must have
    \begin{align*}
		2\mathcal{C}_p\|W(f_{j},f_{j'})\|_{L^p(\Omega-c_{j,j'})}=L(\|f_j\|_{L^2}^2+\|f_{j'}\|_{L^2}^2).
	\end{align*}
    If one of $f_j$ or $f_{j'}$ is zero, then they must both be zero. Otherwise, in view of
	Corollary~\ref{cor-optpol}, after setting \(h_{j,\theta}^{\pm}=f_j\pm e^{i\theta}f_{j'}\) for $\theta \in [0,2\pi]$,
    by covariance we conclude that
	\begin{align*}
		L\|h_{j,\theta}^{\pm}\|_{L^2}^2=\|W(h_{j,\theta}^{\pm})\|_{L^p(\Omega-c_{j,j'})}=\|W(\pi(c_{j,j'})h_{j,\theta}^{\pm})\|_{L^p(\Omega)}.
	\end{align*}
    Observe that by Corollary~\ref{cor-ortho} we have
    \(\|h_{j,\theta}^{\pm}\|_{L^2}^2=\|f_j\|_{L^2}^2+\|f_{j'}\|_{L^2}^2\neq 0\).
    In particular, this means that there exists a one-parameter family of optimizers for the problem \eqref{eq-WigSup}, namely $\{ \pi(c_{j,j'})(f_j + e^{i\theta} f_{j'}) : \theta \in [0,2\pi]\}$. 
	
	To summarize, if \(\|f_{j_0}\|_{L^2}^2\neq 0\), then \(\pi(z_0)f_{j_0}\) is an optimizer. If no compact profile exists then there is at least one surviving pair \((f_j,f_{j'})\) of non-trivial profiles which generates a family \(\{\pi(c_{j,j'})(f_j + e^{i\theta}f_{j'})\}_{\theta\in [0,2\pi]}\) of optimizers. In any case, we can conclude that the supremum \eqref{eq-WigSup} is attained. \qed 

\subsection{Comments on exotic optimizers}\label{sec:Exotic}
Theorem~\ref{BigTheorem} establishes the existence of an optimizer for the Problem \eqref{eq-WigSup}. However, the proof reveals two very different situations: up to shifts, the optimizer may be the single compact profile extracted from a profile decomposition or any of those belonging to the one-parameter family associated with a surviving pair. 

It must be stressed that, in the second (degenerate) case, the surviving pair \((f_j,f_{j'})\) happens to be an optimizer of the inequality \eqref{eq-polar2}, hence it comes along with the additional constraints highlighted in Corollary~\ref{cor-optpol}, some of which have been deduced in Corollaries~\ref{cor-Xzero} and~\ref{cor-ortho}. In view of these results, it is clear that if $(f_j,f_{j'})$ is a surviving pair in the profile decomposition \eqref{eq-wigprofdec}, then it comes along with a family of optimizers $\pi(c_{j,j'})(f_j +e^{i\theta}f_{j'})$ with $\theta \in [0,2\pi]$ satisfying the following constraints:
\begin{equation}
    Wf_j(z)=-Wf_{j'}(z) \quad \text{for a.e. }z \in \Omega-c_{j,j'}, \qquad \langle f_j,f_{j'}\rangle=0.
\end{equation}
The optimizing contribution on $\Omega$ is then purely due to constructive interferences, since
\begin{equation}
    \norm{W(\pi(c_{j,j'})(f_j +e^{i\theta}f_{j'}))}_{L^p(\Omega)} = \norm{2\Re (e^{-i\theta} W(f_j,f_{j'}))}_{L^p(\Omega-c_{j,j'})}.
\end{equation}
While these conditions seem not to be a priori incompatible, it is evident that they place extremely rigid constraints on the exotic optimizers. Still, they are not strong enough to completely rule out the occurrence of such optimizers and have so far eluded our attempts to exhibit examples of fitting functions. Nevertheless, we can claim that the occurrence of exotic optimizers is rare in the grand scheme. More precisely, the subset of pairs of exotic optimizers is meagre in $(L^2(\rd))^2$, being a subset of the set $\mathcal{N}$ of optimizers for the inequality \eqref{eq-polar2}, which is in turn a subset of the set $\mathcal{O}$ of pairs of orthogonal functions in $(L^2(\rd))^2$ as a consequence of Corollary~\ref{cor-ortho}. It is straightforward to prove that the latter is a nowhere dense subset, being indeed closed with empty interior. In particular, for every $(f,g) \in \mathcal{O}$ one can find a pair of non-zero directions $h,k \in L^2(\rd)$ such that the family $(f+th,g+tk)$, $t>0$, belongs to $\mathcal{O}^c$ while converging to $(f,g)$. 

\begin{remark} In fact, by refining the previous argument it is possible to show that $\mathcal{O}$, hence the exceptional set \(\mathcal{N}\), satisfies pretty much any other meaningful notion of negligibility, such as being directionally porous, which in turn implies being shy (or equivalently Haar null) and Gaussian null. We avoid such a technical discussion and address the interested reader to \cite{Zajicek} for further details. 
\end{remark}

We are thus left with some open issues. Theorem~\ref{BigTheorem} establishes the existence of an optimizer, but examining the proof reveals several candidates for the optimizer. As such, it is natural to ask the following questions related to uniqueness of the optimizer.

\begin{question} Are we guaranteed that a compact profile \(f_{j_0}\) exists? Are the properties provided in Section~\ref{sec:Exotic} actually incompatible? \end{question}

\section{The $L^\infty$ concentration problem} \label{sec-linfty}
In the case \(p=\infty\), the existence of an optimizer is significantly easier to establish.
\begin{prop}
    Let \(\Omega\subset \R^{2d}\) be measurable such that \(0<|\Omega|<\infty\). We have
    \begin{equation}\label{InfinityProblem}
        \sup_{f\in L^2(\R^d)\setminus\{0\}}\frac{\|Wf\|_{L^\infty(\Omega)}}{\|f\|_{L^2}^2}=2^d,
    \end{equation}
    and the supremum is attained.
\end{prop}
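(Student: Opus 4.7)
The plan is to match the value $2^d$ from above via the pointwise Wigner bound and from below by exhibiting an explicit time-frequency shift of the standard Gaussian that saturates it on $\Omega$. The upper bound is immediate from the second inequality in \eqref{eq-wignorm-bds}: for every nonzero $f \in L^2(\rd)$ one has $\|Wf\|_{L^\infty(\Omega)} \le \|Wf\|_{L^\infty} \le 2^d\|f\|_{L^2}^2$, so the supremum is at most $2^d$.

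For the matching lower bound and for attainment, I would use the normalized Gaussian $\varphi(y) = 2^{d/4}e^{-\pi|y|^2}$, which satisfies $\|\varphi\|_{L^2}=1$, and whose Wigner distribution is the Gaussian $W\varphi(z) = 2^d e^{-2\pi|z|^2}$; in particular the pointwise peak $W\varphi(0)=2^d$ saturates the Wigner bound at the origin. Since $|\Omega|>0$, the Lebesgue differentiation theorem provides a density-one point $z_0 \in \Omega$. I would then set $f = \pi(z_0)\varphi$, so that $\|f\|_{L^2}=1$ and, by covariance \eqref{eq-covtau}, $|Wf(z)| = 2^d e^{-2\pi|z-z_0|^2}$, which peaks at $z_0$ with value $2^d$.

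The remaining step is to transfer this pointwise peak into an essential supremum over the merely measurable set $\Omega$. Since $Wf \in C_0(\rdd)$, for every $\varepsilon>0$ the super-level set $\{z \in \rdd : |Wf(z)|>2^d-\varepsilon\}$ is an open neighborhood of $z_0$ and hence contains a ball $B(z_0,r_\varepsilon)$; because $z_0$ has Lebesgue density one in $\Omega$, this gives $|B(z_0,r_\varepsilon)\cap\Omega|>0$, forcing $\|Wf\|_{L^\infty(\Omega)}\ge 2^d-\varepsilon$. Letting $\varepsilon\to 0$ and combining with the upper bound yields $\|Wf\|_{L^\infty(\Omega)} = 2^d = 2^d \|f\|_{L^2}^2$, so $f$ is an optimizer and the supremum in \eqref{InfinityProblem} equals $2^d$ and is attained.

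I do not foresee any real obstacle here: the only delicate point is the passage from a single-point Gaussian peak to an essential supremum over an arbitrary positive-measure set, and this is resolved cleanly by combining the Lebesgue density theorem with the continuity of the cross-Wigner distribution recalled in Section~2.
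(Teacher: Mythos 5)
Your proof is correct and follows essentially the same approach as the paper: both establish the upper bound from the sharp pointwise estimate $\|Wf\|_{L^\infty}\le 2^d\|f\|_{L^2}^2$, then use covariance to translate a peak of the Wigner distribution to a Lebesgue density point of $\Omega$, and finally invoke continuity plus positive density to lift the pointwise peak to the essential supremum. The only cosmetic difference is that you work directly with the Gaussian $\varphi$ (for which $W\varphi(0)=2^d$ is computed explicitly), whereas the paper first reduces $\|W\cdot\|_{L^\infty(\Omega)}$ to $|W\cdot(0)|$ and then observes that any even unit-norm $g$ achieves $Wg(0)=2^d$; this makes the set of optimizers visibly larger, but the underlying argument is identical.
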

\begin{proof}
    Recall from \eqref{eq-wignorm-bds} the $L^\infty$ estimate
    \begin{align*}
        \|Wg\|_{L^\infty(\Omega)}\leq \|Wg\|_{L^\infty}\leq 2^d \|g\|_{L^2}^2.
    \end{align*}
    It is clear that $\|Wg\|_{L^\infty}$ is attained at some $w_0 \in \rdd$, since $Wg \in C_0(\rdd)$ for $g \in L^2(\rd) \setminus\{0\}$. Pick a point of positive Lebesgue density $c \in \Omega$ (recall that $\Omega$ has positive measure) and set $f=\pi(c-w_0)g$. Therefore, $\| W f\|_{L^\infty(\Omega)} \ge |W f(c)| = |W g(w_0)| = \|W g\|_{L^\infty}$, and since by covariance, $|W g(w_0)|=|W(\pi(-w_0)g)(0)|$, we have
\[ \sup_{f\in L^2(\R^d)\setminus\{0\}}\frac{\|W f \|_{L^\infty(\Omega)}}{\|f\|_{L^2}^2} = \sup_{g\in L^2(\R^d)\setminus\{0\}}\frac{\|W g \|_{L^\infty}}{\|g\|_{L^2}^2} = \sup_{g\in L^2(\R^d)\setminus\{0\}}\frac{|W g(0)|}{\|g\|_{L^2}^2}.
\]
If we let \(g\) be an even function, then the claim follows, since
    \begin{align*}
        Wg(0)
        &=\int_{\R^{d}} g(y/2)\overline{g(-y/2)} \dd y\\
        &=\int_{\R^{d}} |g(y/2)|^2 \dd y\\
        &=2^d\int_{\R^{d}} |g(u)|^2 \dd u=2^d\|g\|_{L^2}^2. \qedhere
    \end{align*}
\end{proof}
\begin{remark}
    Notice that the same argument works also when $g$ is odd, since $Wg(0)=-2^d \norm{g}_{L^2}^2$, hence the set of optimizers for \eqref{InfinityProblem} is quite large. On the other hand, there exist non-zero functions whose concentration functional vanishes. For instance, assuming that $\Omega$ is bounded with $R\coloneqq \operatorname{diam}(\Omega)$, if $f$ is supported on $[R+1,R+2]$ then the support property of the Wigner distribution in $d=1$ \cite[Lemma 4.3.5]{Grochenig} implies that $Wf(x,\xi)=0$ whenever $|x|\le R$. Therefore, for any $c=(c_x,c_\xi)\in\Omega$ and any $z=(x,\xi)\in\Omega$ we have $|x-c_x|\le |z-c|\le R$, hence $W(\pi(c)f)(z)=Wf(z-c)=0$ for all $z\in\Omega$. As a result, $\|W(\pi(c)f)\|_{L^\infty(\Omega)}=0$.
    \end{remark}
    
\begin{remark}
    Compared to the case \(p\in [1,\infty)\), where the conclusion is the same for the Wigner distribution as for the ambiguity function, the \(p=\infty\) case is strikingly different. For the ambiguity function, the supremum is only attained if \(0\in \Omega\), but if this is the case, every function will be an optimizer~\cite[Proposition 1.2]{AmbConcentration}. On the other hand, the supremum is always attained for the Wigner distribution, but for a function to be an optimizer it has to satisfy very specific symmetry conditions. This further illustrates the difference between the time-frequency covariant Wigner concentration and the time-frequency invariant ambiguity concentration problems.
\end{remark}

\section{Generalizations and open problems} \label{sec-gen}
In this section we discuss some variations on the main problem arising from considering different yet related time-frequency distributions, namely the \(\tau\)-Wigner distributions and the Born--Jordan distribution. As in the proof of Theorem~\ref{BigTheorem}, we approach the corresponding problems via profile decomposition.  We carry the strategy as far as possible, but certain crucial steps differ from the Wigner case and call for suitable adjustments, leading to very different outcomes: Indeed, we cannot conclude the existence of an optimizer (which remains an open problem for the time being) because the structure of maximizing profiles is significantly more complex (see Lemma~\ref{lem-deg-acycl}) when $\tau \ne 1/2$, whereas for the Born--Jordan problem (roughly speaking, after averaging over $\tau$) we are even able to unlock the direct method by proving full weak continuity. 

\subsection{Concentration problems for $\tau$-Wigner distributions}
Recall that the cross-\textit{\(\tau\)-Wigner distributions} \cite{BDDO,CT_20} are defined for \(f,g\in L^2(\R^d),\; \tau\in (0,1) \) by \begin{align*}
    W_{\tau}(f,g)(z)=\int_{\R^d}e^{-2\pi i \xi \cdot y} f(x+\tau y)\overline{g(x-(1-\tau)y)} \; \dd y.
\end{align*} Letting \(\tau=1/2\) recovers the Wigner distribution. Because of the similar structure, the \(\tau\)-Wigner distributions inherit many of the important properties of the Wigner distribution, for instance covariance under time-frequency shifts: this reads (cf.\ \cite[Proposition 3.1]{CNT_19})
	\begin{equation} \label{eq-covtau}
			W_\tau(\pi(a)f,\pi(b)g)(z)
			= e^{2\pi i\,[z,a-b]}	e^{2\pi i (x_a-x_b)\cdot(\tau \xi_a+(1-\tau)\xi_b)} W_\tau(f,g)\!\big(z-c_{\tau}(a,b)\big),	\end{equation}
where we set \begin{equation} 
	c_{\tau}(a,b)
	\coloneqq \big((1-\tau)x_a+\tau x_b\,,\, \tau \xi_a+(1-\tau)\xi_b\big),
	\qquad a=(x_a,\xi_a),\ b=(x_b,\xi_b).
	\end{equation}
Moreover, Theorem~\ref{thm-embed} also extends to \(W_{\tau}\).

\begin{theorem}[\!\!{\cite[Theorem 4.12]{CT_20}}]\label{thm-embedFull}
    Let \(f\in L^2(\rd)\), \(g\in M^{\infty}(\rd)\). Then \(W_\tau (f,g)\in W(L^2,L^\infty)(\rdd)\) for every $\tau \in (0,1)$, with
		\begin{align*}
			\|W_\tau (f,g)\|_{W(L^2,L^\infty)}\lesssim_d C(\tau) \|f\|_{L^2}\|g\|_{M^{\infty}},
		\end{align*} where \(\displaystyle C(\tau)=\frac{2^d}{(\tau(1-\tau))^{d/2}}\).
\end{theorem}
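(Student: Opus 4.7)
The plan is to derive Theorem~\ref{thm-embedFull} from Theorem~\ref{thm-embed} via an explicit dilation identity linking the $\tau$-Wigner to the ordinary cross-Wigner. The substitution $u = x + \tau y$ in the defining integral for $W_\tau(f,g)$, followed by elementary algebra, yields the pointwise identity
\[
\bigl|W_\tau(f,g)(z)\bigr| = (2\tau)^{-d}\,\bigl|W(f, g_\tau)(L_\tau z)\bigr|, \qquad L_\tau(x,\xi) := \bigl(\tfrac{x}{2(1-\tau)},\tfrac{\xi}{2\tau}\bigr),
\]
where $g_\tau(s) := g\bigl(\tfrac{1-\tau}{\tau}\,s\bigr)$ is an isotropic dilation of $g$. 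Up to a global prefactor, $W_\tau(f,g)$ is thus the composition of an ordinary cross-Wigner $W(f,g_\tau)$ with the anisotropic symplectic dilation $L_\tau$.

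The main computation is then to track how $L_\tau$ and the dilation $g\mapsto g_\tau$ act on the two Wiener-amalgam-type norms involved. Both $\|\cdot\|_{W(L^2,L^\infty)}$ and $\|\cdot\|_{M^\infty}$ are defined only up to window equivalence, so the bookkeeping reduces to explicit change-of-variable computations. For $W(L^2,L^\infty)$, a change of variables in the defining integral yields the Jacobian factor $(4\tau(1-\tau))^d$, while matching the resulting rescaled window back to a fixed reference window (by a standard tiling-by-translates argument) produces an inverse volume factor; the two combine to bound $\|F\circ L_\tau\|_{W(L^2,L^\infty)}$ by an explicit power of $\tau(1-\tau)$ times $\|F\|_{W(L^2,L^\infty)}$. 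An analogous calculation at the level of the short-time Fourier transform controls $\|g_\tau\|_{M^\infty}$ by an explicit $\tau$-dependent factor times $\|g\|_{M^\infty}$, with the window-equivalence constant cleanly evaluated against a Gaussian reference window, for which all relevant integrals reduce to explicit Gaussian computations.

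Finally, invoking Theorem~\ref{thm-embed} for $W(f,g_\tau)$ gives $\|W(f,g_\tau)\|_{W(L^2,L^\infty)} \lesssim_d \|f\|_{L^2}\|g_\tau\|_{M^\infty}$, and chaining the three estimates produces an inequality of the form $\|W_\tau(f,g)\|_{W(L^2,L^\infty)} \lesssim_d (\text{power of }\tau\text{ and }1-\tau)\cdot \|f\|_{L^2}\,\|g\|_{M^\infty}$. The principal obstacle is precisely the constant bookkeeping: the reduction is manifestly asymmetric in $\tau\leftrightarrow 1-\tau$ (reflecting the choice $u = x+\tau y$), while the target $C(\tau)=2^d/(\tau(1-\tau))^{d/2}$ is symmetric, so one must verify that the various $\tau$-dependent factors conspire to produce that clean symmetric form. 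This demands exact---not merely asymptotic---evaluation of the window-equivalence constants, so that every intervening integral collapses into precisely $2^d(\tau(1-\tau))^{-d/2}$.
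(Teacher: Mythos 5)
This statement is an external citation (\cite[Theorem~4.12]{CT_20}); the paper you are working in does not prove it, so there is no internal proof to compare against. That said, your proposed route has a genuine gap that I want to flag concretely.

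Your anisotropic dilation identity is in fact correct: substituting $y'=\frac{(1-2\tau)}{1-\tau}x+2\tau y$ in the defining integral gives
\[
W_\tau(f,g)(x,\xi)=(2\tau)^{-d}\,e^{\pi i\frac{1-2\tau}{\tau(1-\tau)}x\cdot\xi}\,W(f,g_\beta)\!\Big(\frac{x}{2(1-\tau)},\frac{\xi}{2\tau}\Big),
\qquad g_\beta(s)=g\Big(\frac{1-\tau}{\tau}s\Big),
\]
so $|W_\tau(f,g)(z)|=(2\tau)^{-d}|W(f,g_\beta)(L_\tau z)|$ as you claim (the chirp, which you omit, is unimodular and irrelevant to all the norms involved). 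The problem is the subsequent constant tracking, which you flag as a concern but treat as bookkeeping; it is in fact fatal. Two distinct losses accumulate and do not cancel. First, the $W(L^2,L^\infty)$ change of variables produces the Jacobian factor $(4\tau(1-\tau))^{d/2}$, but to return to a fixed reference window you must compare against $\varphi\circ L_\tau^{-1}$, whose support is an ellipsoid with axis ratio $\tau/(1-\tau)$; the covering constant for this window change is of order $(4\tau(1-\tau))^{-d/2}$, which exactly eats the Jacobian gain but gives back nothing — you end up with at best $\|F\circ L_\tau\|_{W(L^2,L^\infty)}\lesssim_d\|F\|_{W(L^2,L^\infty)}$, contributing no positive power of $\tau(1-\tau)$. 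Second, and more seriously, the $M^\infty$ dilation bound is much worse than you need: the window-change estimate $\|g\|_{M^\infty}[\phi_1]\le\frac{\|V_{\phi_1}\phi_2\|_1}{|\langle\phi_2,\phi_1\rangle|}\|g\|_{M^\infty}[\phi_2]$ with Gaussian windows gives $\|g_\beta\|_{M^\infty}\lesssim(1+\beta^{-2})^d\|g\|_{M^\infty}$; even the sharp dilation bound for $M^\infty$ (testable on $g=\delta_0$, for which $\|D_\lambda\delta_0\|_{M^\infty}=\lambda^{-d}\|\delta_0\|_{M^\infty}$) cannot be better than $\max(1,\beta^{-d})$. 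Feeding these into your chain, with $\beta=(1-\tau)/\tau$, yields after the prefactor $(2\tau)^{-d}$ something of order $(1-\tau)^{-3d/2}$ as $\tau\to1$ (and $\tau^{-d}$ as $\tau\to0$ after symmetrizing the substitution), which is strictly larger than the target $(\tau(1-\tau))^{-d/2}$ near both endpoints. This is not a bookkeeping issue: the endpoint blow-up in your chain is structurally wrong, and no finite amount of careful evaluation of window-equivalence constants will make it collapse to the stated $C(\tau)$.

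This matters for the way the paper uses the theorem. In the Born--Jordan argument (Proposition~\ref{prop-BJ-wusc}, Step~3) one needs $\int_0^1 C(\tau)\,\dd\tau<\infty$ for $d=1$, which holds for $C(\tau)\asymp(\tau(1-\tau))^{-1/2}$ but fails for anything as bad as $(1-\tau)^{-1}$ or worse. So the extra power your approach produces would break a downstream argument. The cited proof in [CT_20] works directly from the explicit STFT/cross-Wigner representation of $W_\tau(f,g)$ and the dilation factor $(\tau(1-\tau))^{-d/2}$ comes out of a single Gaussian-type integral there, rather than as a composite of a $W(L^2,L^\infty)$ change of variables, a window equivalence, and an $M^\infty$ dilation bound — that is why their constant is clean and yours is not.
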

It is therefore natural to ask whether the conclusion of Theorem~\ref{BigTheorem} holds for the other \(\tau\)-Wigner distributions as well. By the elementary \(L^\infty\)-bound 
\[ \|W_\tau(f,g)\|_\infty\leq {(\tau(1-\tau))^{-d/2}}\|f\|_{L^2}\|g\|_{L^2}\] it is clear that the supremum 
\[ L_\tau=\sup_{f\in L^2(\R^d)\setminus\{0\}}\frac{\left(\int_{\Omega}|W_\tau f(z)|^p \dd z\right)^{1/p}}{\|f\|_{L^2}^2} \]
is finite (and of course positive). In addition, the same argument as in the proof of Proposition~\ref{prop:wuSCfail} shows that the corresponding concentration functional is not weakly upper semicontinuous at any point of \(L^2(\rd)\). We therefore again consider profile decomposition as in Lemma~\ref{lem-profdec}, leading to a similar expansion as in \eqref{eq-wigprofdec}, albeit with $W_\tau$ in place of $W$. In light of Theorem~\ref{thm-embedFull} the error terms can be controlled using exactly the same argument, and the covariance property of \(W_{\tau}\) shows that all pure terms vanish, possibly except for the compact profile \(f_{j_0}\). 

Due to the asymmetry of \(W_\tau\), though, the cross terms exhibit a less controlled behavior. Indeed, the expansion of $W_\tau(f^{(n)})$ in \eqref{eq-wigprofdec} naturally leads to grouping cross terms into symmetrized blocks: for each pair of distinct profile indices $(j,j')$, we set
	\[
	\cS_{j,j'}^{(n)}(z) \coloneqq W_\tau(\pi(\zjn)f_j,\pi(\zjpn)f_{j'})(z)+W_\tau(\pi(\zjpn)f_{j'},\pi(\zjn)f_{j})(z).
	\]
	The asymptotic behavior of this block depends critically on the parameter $\tau$. By virtue of covariance, setting $\zjn=(x_j^{(n)},\xi_{j}^{(n)})$ and $\zjpn=(x_{j'}^{(n)},\xi_{j'}^{(n)})$ we have:
\begin{itemize}
	\item $W_\tau(\pi(\zjn)f_j,\pi(\zjpn)f_{j'})$ is centered at
	\[
	\cnj \coloneqq c_\tau(\zjn,\zjpn)
	=\big((1-\tau)\,x_j^{(n)}+\tau\,x_{j'}^{(n)},\;\;\tau\,\xi_j^{(n)}+(1-\tau)\,\xi_{j'}^{(n)}\big).
	\]
	\item $W_\tau(\pi(\zjpn)f_{j'},\pi(\zjn)f_j)$ is centered at
	\[
	\cnjp \coloneqq c_\tau(\zjpn,\zjn)
	=\big((1-\tau)\,x_{j'}^{(n)}+\tau\,x_j^{(n)},\;\;\tau\,\xi_{j'}^{(n)}+(1-\tau)\,\xi_j^{(n)}\big).
	\]
\end{itemize}
Hence
\[
\cnj-\cnjp
=(1-2\tau)\,\big(x_j^{(n)}-x_{j'}^{(n)},\;-(\xi_j^{(n)}-\xi_{j'}^{(n)})\big). 
\]
Setting $\dnj\coloneqq\zjn-\zjpn$, we thus have
\[
\big|c_\tau(\zjn,\zjpn)-c_\tau(\zjpn,\zjn)\big|
=|1-2\tau|\,|\dnj|, 
\]
and since $|\dnj| \to \infty$ by construction we face a significant dichotomy: 
\begin{itemize}
	\item If $\tau=1/2$ the centers coincide and the block coalesces into an oscillatory function as before: 
	\begin{equation}
		\cSjn=2\Re(W(\pi(\zjn)f_j,\pi(\zjpn)f_{j'})).
	\end{equation}
	\item If $\tau\ne 1/2$ the centers separate along $|\dnj|$ and the two contributions decouple in phase space. 
\end{itemize}
In any case, for the block $\cS_{j,j'}^{(n)}$ to have a non-vanishing contribution to the $L^p(\Omega)$ norm in the limit, at least one of its two centers, $\cnj$ or $\cnjp$, must remain bounded as $n\to\infty$. Just like for the pure terms, only those blocks $\cSjn$ for which at least one of the two centers
$\cnj,\cnjp$ remains bounded as $n\to\infty$ can contribute in the limit, and in that case we refer to $\cSjn$ as a \textit{surviving block}. More precisely, we say the ordered pair $(j,k)$ is \emph{surviving} if the center sequence $\{c^{(n)}_{j,k}\}_n$ is bounded in $\rdd$, while a \emph{chain} is a (finite or infinite) sequence $(j_1,j_2,\dots)$ of distinct indices such that each consecutive ordered pair $(j_r,j_{r+1})$ is surviving. We stress that when $\tau \ne 1/2$ the centers $\cnj$ and $\cnjp$ diverge from one another, hence for any surviving block exactly one of these two center sequences can be bounded, while for $\tau=1/2$ the two centers coincide.

These findings show that the structure of the graph of surviving indices is richer when $\tau \ne 1/2$. Indeed, simple generalizations of the arguments in the case $\tau = 1/2$ provide the following conclusions. 

\begin{lemma} \label{lem-deg-acycl}
	Fix $\tau\in(0,1)$ and consider the directed graph on profile indices where a directed edge $(j,k)$ is placed for each and only surviving pair. Then:
	\begin{enumerate}
		\item A member of a surviving pair cannot be a compact profile: if $(j,k)$ is surviving then $z^{(n)}_j$ and $z^{(n)}_k$ are unbounded sequences.
		\item Each node has in-degree and out-degree $\le 1$: for each $j$ there is at most one $k$ with $(j,k)$ surviving, and at most one $\ell$ with $(\ell,j)$ surviving. In particular, when $\tau= 1/2$ an index can belong to at most one surviving pair.
		\item The graph is acyclic if $\tau\ne 1/2$. 
	\end{enumerate}
	Therefore, the surviving-edge graph is a disjoint union of chains, that for $\tau=1/2$ reduces to a disjoint union of undirected edges (a matching).
\end{lemma}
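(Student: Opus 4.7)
My plan is to linearize the surviving-edge condition. Writing the centering map in block form as $c^{(n)}_{j,k}=Az_j^{(n)}+Bz_k^{(n)}$ with block-diagonal operators $A=\mathrm{diag}((1-\tau)I_d,\tau I_d)$ and $B=\mathrm{diag}(\tau I_d,(1-\tau)I_d)$ acting on $\rdd$, I note that both $A$ and $B$ are invertible (since $\tau\in(0,1)$) and that $A+B=I_{2d}$. All three parts of the lemma will follow from this identity together with the profile separation property $|z_j^{(n)}-z_k^{(n)}|\to\infty$ for distinct nontrivial indices guaranteed by Lemma~\ref{lem-profdec}.

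For part (1), I assume that $(j,k)$ is a surviving pair but, say, $z_j^{(n)}$ is bounded, and seek a contradiction. Since $Bz_k^{(n)}=c^{(n)}_{j,k}-Az_j^{(n)}$ is then bounded, invertibility of $B$ forces $z_k^{(n)}$ to be bounded as well, which in turn keeps $|z_j^{(n)}-z_k^{(n)}|$ bounded, contradicting separation. The symmetric argument handles $z_k^{(n)}$, so neither endpoint of a surviving edge can coincide with the compact profile.

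For part (2), I rule out two outgoing edges $(j,k)$ and $(j,\ell)$ with $k\ne\ell$ by subtracting the two bounded centers to get $B(z_k^{(n)}-z_\ell^{(n)})=O(1)$, contradicting separation; the in-degree bound is identical after swapping the roles of $A$ and $B$. When $\tau=1/2$ the equality $A=B=\tfrac12 I_{2d}$ makes the centering map symmetric in its two arguments, so $(j,k)$ is surviving iff $(k,j)$ is, and the out-degree bound immediately confines each index to at most one surviving partner, recovering the matching structure.

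For part (3), I consider a hypothetical directed cycle $j_1\to j_2\to\dots\to j_m\to j_1$ of length $m\ge 2$ and rule it out. Each edge yields $z_{j_{r+1}}^{(n)}=Mz_{j_r}^{(n)}+O(1)$ with the iteration matrix $M\coloneqq -B^{-1}A=\mathrm{diag}\bigl(-\tfrac{1-\tau}{\tau}I_d,\,-\tfrac{\tau}{1-\tau}I_d\bigr)$; closing the cycle gives $(I_{2d}-M^m)z_{j_1}^{(n)}=O(1)$. Since the eigenvalues of $M^m$ are $\bigl(-\tfrac{1-\tau}{\tau}\bigr)^m$ and $\bigl(-\tfrac{\tau}{1-\tau}\bigr)^m$, both negative for $m$ odd (so never equal to $1$) and positive for $m$ even (equal to $1$ only when $\tfrac{1-\tau}{\tau}=1$, i.e.\ $\tau=1/2$), the matrix $I_{2d}-M^m$ is invertible whenever $\tau\ne 1/2$. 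Consequently $z_{j_1}^{(n)}$, and then every $z_{j_r}^{(n)}$, would be bounded, contradicting separation. The concluding "therefore" assertion (chains, and matchings at $\tau=1/2$) is an immediate graph-theoretic consequence of (2) and (3). I expect part (3) to be the main technical point, since it requires pinning down the iteration matrix $M$ and recognizing that the exceptional role of $\tau=1/2$ is precisely the one making $M$ have $\pm 1$ eigenvalues, which in turn produces the $2$-cycles that characterize the Wigner case.
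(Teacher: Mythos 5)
Your proof is correct, and it takes a more systematic route than the paper's sketch. The paper does not present a detailed argument for this lemma: it says "simple generalizations of the arguments in the case $\tau=1/2$" suffice, where the $\tau=1/2$ case is handled by elementary triangle-inequality manipulations with the midpoints $c^{(n)}_{j,j'}=(z_j^{(n)}+z_{j'}^{(n)})/2$; and for part (3) the paper only makes explicit the exclusion of $2$-cycles, via the observation that the centers $\cnj$ and $\cnjp$ separate along $|1-2\tau|\,|\dnj|\to\infty$. Your block-operator decomposition $c^{(n)}_{j,k}=Az_j^{(n)}+Bz_k^{(n)}$ with invertible $A=\mathrm{diag}((1-\tau)I_d,\tau I_d)$, $B=\mathrm{diag}(\tau I_d,(1-\tau)I_d)$ and $A+B=I_{2d}$ handles parts (1) and (2) in one stroke from the invertibility of $A$ and $B$ together with the separation property. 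More importantly, the iteration matrix $M=-B^{-1}A$ and the invertibility check on $I_{2d}-M^m$ gives a clean, uniform treatment of cycles of arbitrary length $m$, a point the paper glosses over; the eigenvalue computation also makes transparent exactly why $\tau=1/2$ is exceptional (only even-length cycles are at risk, and combining with part (2) shows that only $2$-cycles, i.e.\ undirected edges, survive there). The only point worth stating explicitly is that the separation property in Lemma~\ref{lem-profdec} applies only to nontrivial profiles; since a surviving block with a trivial profile is identically zero and hence irrelevant, this is harmless, but you invoke it silently in parts (1)--(3) after mentioning it only in the opening paragraph.
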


These findings illustrate that when $\tau \ne 1/2$ and chains do appear in a profile decomposition, the main obstruction in the main chaining argument $L_\tau \le \cdots \le L_\tau$, is avoiding energy double-counting. This issue can be ultimately fixed by combining the definition of $L_\tau$ with the asymptotic orthogonality of the extracted profiles: putting technicalities aside, if $\Pi=(j_1,j_2,\dots)$ is a (finite or infinite) chain from the profile decomposition then one has 
\[
	\limsup_{n\to\infty}\ \Big\| W_\tau \Big( \sum_{j\in\Pi}\pi(z^{(n)}_j)f_j \Big) \Big\|_{L^p(\Omega)}
	\ \le\ L_\tau \sum_{j\in\Pi}\|f_j\|_{L^2}^2. 
	\]
Therefore, the same argument used for the Wigner distribution shows that if the compact profile \(f_{j_0}\neq 0\), then an optimizer exists, but when \(f_{j_0}=0\) we cannot just claim that a surviving block is an asymptotic optimizer. Indeed, the Wigner proof crucially relied on Theorem~\ref{thm-avgbd}, where the limit of the block contribution was explicitly computed and bounded. We do not have a sufficiently strong parallel result for chains, leaving open the existence of optimizers under this strategy. 

\begin{question} Is there an analogue of Theorem~\ref{thm-avgbd} for chains? Can we somehow guarantee that a compact profile exists? \end{question}

On the other hand, we are able to obtain a full answer for the $L^\infty$ optimization problem, marking once again the exceptional nature of the case $\tau=1/2$. 
\begin{prop} \label{prop:tauinfty}
            Let \(\Omega\subset \R^{2d}\) be measurable such that \(0<|\Omega|<\infty\), and \(\tau\in (0,1) \setminus\{1/2\}\). We have
    \begin{align}
        \sup_{f\in L^2(\R^d)\setminus\{0\}}\frac{\|W_\tau f \|_{L^\infty(\Omega)}}{\|f\|_{L^2}^2}=\left(\frac{1}{\tau(1-\tau)}\right)^{d/2},
    \end{align}
    but the supremum is not attained. 
    \end{prop}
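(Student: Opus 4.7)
I would prove that the supremum equals $(\tau(1-\tau))^{-d/2}$ and is not attained in three steps: a pointwise Cauchy--Schwarz upper bound, a matching lower bound via truncated power-law quasi-extremizers, and a scaling/functional equation argument ruling out any $L^2$ optimizer. The main difficulty will be Step 2: the extremal profile saturating the pointwise bound is the scale-invariant function $|u|^{-d/2}$, which just fails to lie in $L^2(\rd)$; the truncation must therefore be arranged so that both $\|f_R\|_{L^2}^2$ and $W_\tau f_R(0)$ carry matched logarithmic divergences with the correct leading constants.

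\textbf{Step 1 (upper bound).} Applying Cauchy--Schwarz in $y$ to
\[ W_\tau f(x,\xi) = \int_{\rd} e^{-2\pi i \xi \cdot y}\,f(x+\tau y)\,\overline{f(x-(1-\tau)y)}\dd y, \]
and changing variables $u=x+\tau y$ and $v=x-(1-\tau)y$ (with Jacobians $\tau^{-d}$ and $(1-\tau)^{-d}$), one obtains the pointwise estimate $|W_\tau f(z)| \le (\tau(1-\tau))^{-d/2}\|f\|_{L^2}^2$ for every $z\in\rdd$. Hence $\|W_\tau f\|_{L^\infty(\Omega)} \le (\tau(1-\tau))^{-d/2}\|f\|_{L^2}^2$.

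\textbf{Step 2 (lower bound).} The symmetry $|W_\tau f| = |W_{1-\tau}f|$, obtained by the substitution $y\mapsto -y$ in the defining integral, lets us assume $\tau>1/2$, so that $\lambda:=(1-\tau)/\tau\in(0,1)$. Since $W_\tau f(0) = \tau^{-d}\int f(u)\overline{f(-\lambda u)}\dd u$, Cauchy--Schwarz saturation at the origin would require $f(u)\propto f(-\lambda u)$, a relation whose unique solution (up to constants) is the non-$L^2$ profile $|u|^{-d/2}$. I would therefore consider the truncated family
\[ f_R(u) = |u|^{-d/2}\,\chi_{\{1/R \le |u| \le R\}}(u), \qquad R > 1, \]
and compute in polar coordinates $\|f_R\|_{L^2}^2 = c_d\,\log R$ together with $W_\tau f_R(0) = c_d\,\tau^{-d}\lambda^{-d/2}\log R + O(1)$, noting that $f_R(u)f_R(-\lambda u)$ is supported on the overlap annulus $\{1/(R\lambda) \le |u| \le R\}$. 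The ratio therefore tends to $\tau^{-d}\lambda^{-d/2}=(\tau(1-\tau))^{-d/2}$. To bring this concentration inside $\Omega$, pick a Lebesgue density point $z_0\in\Omega$ (possible since $|\Omega|>0$) and set $g_R = \pi(z_0)f_R$; by covariance $|W_\tau g_R(z_0)| = |W_\tau f_R(0)|$, and since $W_\tau g_R\in C_0(\rdd)$ is continuous and $\Omega$ has positive density at $z_0$, it follows that $\|W_\tau g_R\|_{L^\infty(\Omega)} \ge |W_\tau g_R(z_0)|$, completing the matching lower bound.

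\textbf{Step 3 (non-attainment).} Suppose for contradiction that some $f\in L^2(\rd)\setminus\{0\}$ achieves the supremum. The global pointwise bound from Step 1 would then be saturated as well, and since $W_\tau f\in C_0(\rdd)$ the maximum $(\tau(1-\tau))^{-d/2}\|f\|_{L^2}^2$ is attained at some $z^*\in\rdd$; by covariance I may assume $z^*=0$. Then Cauchy--Schwarz equality at the origin forces the functional equation $f(u) = c\,f(-\lambda u)$ a.e., for some $c\in\C$ with $|c|^2 = \lambda^d$. Iterating once yields $f(\lambda^2 u) = c^{-2}f(u)$ a.e., and a change of variables gives
\[ \int_{|u|>r}|f(u)|^2\dd u \;=\; \int_{|u|>r\lambda^{-2k}}|f(u)|^2\dd u \qquad \text{for every } k\in\N. \]
Since $\lambda\in(0,1)$, the right-hand side tends to $0$ as $k\to\infty$, so $\int_{|u|>r}|f|^2=0$ for every $r>0$, forcing $f=0$ in $L^2(\rd)$, a contradiction.
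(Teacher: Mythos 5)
Your proof is correct, and it takes a genuinely different route from the paper's. The paper first reduces (exactly as you do, via covariance, the $C_0$-decay of $W_\tau g$, and a Lebesgue density point of $\Omega$) to computing $\sup_{g\ne 0}|W_\tau g(0)|/\|g\|_{L^2}^2$. It then rewrites $W_\tau g(0)=(\tau(1-\tau))^{-d/2}\langle D_{-\tau/(1-\tau)}g,g\rangle$, where $D_a g(y)=|a|^{d/2}g(ay)$ is a unitary dilation, and invokes Rudin's theorem that the numerical radius of a normal operator equals its operator norm, obtaining the supremum as $\|D_{-\tau/(1-\tau)}\|_{L^2\to L^2}=1$; non-attainment is then read off from the absence of $L^2$-eigenvectors for a non-trivial dilation, a fact the paper asserts without proof. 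Your Steps~1--2 replace the operator-theoretic bound and the numerical-range argument with a direct Cauchy--Schwarz estimate and the explicit truncated scale-invariant family $f_R=|u|^{-d/2}\chi_{\{1/R\le|u|\le R\}}$, whose matched logarithmic divergences give the sharp constant constructively. Your Step~3, iterating the Cauchy--Schwarz-equality relation $f(u)=cf(-\lambda u)$ until $\int_{|u|>r}|f|^2=0$, is precisely a hands-on proof of the eigenvector-free fact the paper leaves implicit. Both proofs hinge on the same dilation/self-similarity structure of $W_\tau f(0)$; yours is more elementary and self-contained, while the paper's is shorter but leans on a black-box theorem from functional analysis. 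Two cosmetic points worth making explicit: in Step~3 the functional equation is obtained for the shifted function $\pi(-z^*)f$, not for $f$ itself (which is of course harmless), and the standing normalization $\tau>1/2$ (so that $\lambda\in(0,1)$ and $\lambda^{-2k}\to\infty$) is inherited from Step~2 and should be restated before the iteration.
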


\begin{proof}
Arguing as in the Wigner case, the problem boils down to computing
\[ \sup_{g\in L^2(\R^d)\setminus\{0\}}\frac{|W_\tau g(0)|}{\|g\|_{L^2}^2}.
\]
Since
\[
W_\tau g(0)= \int_{\rd} g(\tau y) \overline{g(-(1-\tau)y)} \dd y = (\tau(1-\tau))^{-d/2}\langle D_\tau g,\, D_{-(1-\tau)} g\rangle,
\]
where $D_a g(y)=|a|^{d/2}g(ay)$ is the unitary dilation by $a \ne 0$, we infer
\[
\sup_{g\in L^2(\R^d)\setminus\{0\}}\frac{|\langle D_\tau g, D_{-(1-\tau)}g\rangle|}{\|g\|_{L^2}^2}
  = \sup_{\|g\|_{L^2}=1} |\langle D_{-\frac{\tau}{1-\tau}}g, g\rangle|  
=\|D_{-\frac{\tau}{1-\tau}}\|_{L^2\to L^2} = 1.
\]
We have invoked~\cite[Theorem 12.25]{Rudin} in the second equality. On the other hand, if the supremum were attained it would force \(D_{-\frac{\tau}{1-\tau}}g=\lambda g\) a.e. for some $\lambda \in \C$ with $\abs{\lambda}=1$, which is impossible for \(g\in L^2(\rd)\setminus\{0\}\) unless $\abs{\frac{\tau}{1-\tau}}=1$, that is $\tau=1/2$.
\end{proof}
    
\subsection{The Born--Jordan concentration problem}\label{ssec:BornJordan}
Recall that for $f,g\in L^2(\rd)$ the Born--Jordan distribution is defined as the $\tau$-average of the $\tau$-Wigner distributions:
\[
\BJ(f,g)\,=\,\int_0^1 W_\tau(f,g)\, \dd \tau.
\]
Equivalently, it can be viewed as the Cohen class distribution with kernel
\begin{equation}\label{eq:bj-def}
	\BJ(f,g) =  W(f,g)* \Theta_\sigma, 
	\qquad 
	\Theta(\zeta)=\frac{\sin(\pi\,\zeta_1\!\cdot\!\zeta_2)}{\pi\,\zeta_1\!\cdot\!\zeta_2}.
\end{equation}
While the Born--Jordan distribution comes with its own set of problems stemming from the bad regularity/decay properties of \(\Theta\), it is intuitively clear that symmetry over $\tau$ should lead to a better behavior when it comes to phase space concentration compared to the \(\tau\)-Wigner distributions. In fact, this is consistent with the findings on the reduction of cross- and self-interference fringes compared to the Wigner distribution, cf.\ again \cite{CGT_18}. Furthermore, using the profile decomposition approach we are able to put these heuristics on a rigorous ground by showing a regularity gain of the Born--Jordan distribution's concentration functional compared to the Wigner distribution's. In particular, we show the following continuity result for \(d=1\).
\begin{prop} \label{prop-BJ-wusc}
Let $d=1$, $1\le p<\infty$, and $\Omega\subset\R^2$ be measurable with $0<|\Omega|<\infty$. The functional $J_{\mathrm{BJ}}(f)\coloneqq\|\BJ f\|_{L^p(\Omega)}$ is sequentially weakly continuous in $L^2(\R)$.
\end{prop}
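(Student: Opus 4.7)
The plan is to exploit the $\tau$-averaging $\BJ=\int_0^1 W_\tau\dd\tau$ to show that the obstructive surviving cross-Wigner pairs encountered in the proof of Theorem~\ref{BigTheorem} actually \emph{vanish} for the Born--Jordan distribution when $d=1$. This upgrades the failure of weak upper semicontinuity for $W$ (Proposition~\ref{prop:wuSCfail}) to genuine weak continuity for $\BJ$, and reduces the proof to a dominated-convergence argument in $\tau$.

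Let $f^{(n)}\rightharpoonup f$ in $L^2(\R)$ and set $g^{(n)}\coloneqq f^{(n)}-f\rightharpoonup 0$. A direct change of variables $y\mapsto-y$ in the defining integral of $W_\tau$ gives $\overline{W_\tau(f,g)}=W_{1-\tau}(g,f)$, hence $\overline{\BJ(f,g)}=\BJ(g,f)$ after substituting $\tau\mapsto 1-\tau$ in the $\tau$-integration. Sesquilinearity of $\BJ$ then yields
\[
\BJ f^{(n)}-\BJ f=\BJ g^{(n)}+2\Re\BJ(f,g^{(n)}),
\]
and by the reverse triangle inequality in $L^p(\Omega)$ it suffices to prove $\|\BJ g^{(n)}\|_{L^p(\Omega)}\to 0$ and $\|\BJ(f,g^{(n)})\|_{L^p(\Omega)}\to 0$.

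The key new input is a \emph{vanishing} counterpart of Theorem~\ref{thm-avgbd}: for $u,v\in L^2(\R)$ and $a^{(n)},b^{(n)}\in\R^2$ with $|a^{(n)}-b^{(n)}|\to\infty$, I claim $\|\BJ(\pi(a^{(n)})u,\pi(b^{(n)})v)\|_{L^p(\Omega)}\to 0$. Covariance of each $W_\tau$ yields the pointwise bound
\[
\big|\BJ(\pi(a^{(n)})u,\pi(b^{(n)})v)(z)\big|\le\int_0^1 \big|W_\tau(u,v)(z-c_\tau^{(n)})\big|\dd\tau,
\]
where $c_\tau^{(n)}\coloneqq c_\tau(a^{(n)},b^{(n)})$ is affine in $\tau$ with direction $(x_a^{(n)}-x_b^{(n)},-(\xi_a^{(n)}-\xi_b^{(n)}))$ of norm $|a^{(n)}-b^{(n)}|\to\infty$. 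Thus, for each bounded $z$, $|z-c_\tau^{(n)}|\to\infty$ for all but at most one $\tau\in[0,1]$ (two bounded values would force the whole segment to stay bounded, contradicting the diverging direction). Since $W_\tau(u,v)\in C_0(\R^2)$ for each $\tau\in(0,1)$ (by density from the Schwartz case), the integrand vanishes pointwise in $z$ for almost every $\tau\in(0,1)$. The elementary Cauchy--Schwarz-with-dilation bound $\|W_\tau(u,v)\|_{L^\infty}\le(\tau(1-\tau))^{-1/2}\|u\|_{L^2}\|v\|_{L^2}$ is integrable in $\tau$ \emph{exactly} in dimension $d=1$, with $\int_0^1(\tau(1-\tau))^{-1/2}\dd\tau=\pi$, which legitimises dominated convergence in $\tau$ and yields pointwise vanishing in $z\in\Omega$. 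The uniform bound $\pi\|u\|_{L^2}\|v\|_{L^2}$ combined with $|\Omega|<\infty$ then gives the $L^p(\Omega)$ conclusion by a second dominated convergence in $z$.

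To assemble the proof, apply Lemma~\ref{lem-profdec} (after harmless normalization) to $g^{(n)}=\sum_{j=1}^k\pi(z_j^{(n)})h_j+w_k^{(n)}$. Weak convergence $g^{(n)}\rightharpoonup 0$ rules out a compact profile: if $z_{j_0}^{(n)}\to z^*$ were bounded, then testing the weak limit against a Schwartz function and using that every escaping profile and $w_k^{(n)}$ tend weakly to zero (properties in Lemma~\ref{lem-profdec}) would force $\pi(z^*)h_{j_0}=0$, hence $h_{j_0}=0$. Thus every nontrivial profile satisfies $|z_j^{(n)}|\to\infty$. Expanding $\BJ g^{(n)}$ sesquilinearly, the pure terms $\BJ(h_j)(\cdot-z_j^{(n)})$ vanish in $L^p(\Omega)$ because $\BJ(h_j)\in C_0(\R^2)$ (by the same dominated convergence in $\tau$ applied to $W_\tau(h_j)\in C_0$), the cross terms vanish by the key lemma, and the three remainder blocks involving $w_k^{(n)}$ are dispatched by integrating the Wiener-amalgam bound of Theorem~\ref{thm-embedFull} over $\tau$, which works in $d=1$ because the prefactor $C(\tau)=2/(\tau(1-\tau))^{1/2}$ is $L^1$ on $(0,1)$, mirroring the Wigner remainder argument of Section~\ref{sec-crosst}. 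An identical scheme applied to $\BJ(f,g^{(n)})=\sum_j\BJ(f,\pi(z_j^{(n)})h_j)+\BJ(f,w_k^{(n)})$ handles the linear-weak term. The principal obstacle, and the precise reason the argument is confined to $d=1$, is exactly this dominated-convergence step: the integrability $\int_0^1(\tau(1-\tau))^{-d/2}\dd\tau<\infty$ holds \emph{if and only if} $d=1$, and extending to $d\ge 2$ would require substantially finer pointwise information about the nodal structure of the Born--Jordan kernel $\Theta$.
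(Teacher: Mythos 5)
Your proof is correct and structurally identical to the paper's: both split $\BJ f^{(n)}-\BJ f=\BJ(h^{(n)})+2\Re\BJ(f,h^{(n)})$ with $h^{(n)}=f^{(n)}-f\rightharpoonup 0$, apply Lemma~\ref{lem-profdec} to $h^{(n)}$, note that weak nullity excludes a compact profile, and dispatch pure, cross, and remainder blocks; the cross-term vanishing statement you prove is exactly the paper's Lemma~\ref{lem:BJ-vanish}, and the remainder blocks are handled via Theorem~\ref{thm-embedFull} and $\int_0^1 C(\tau)\,\dd\tau<\infty$ when $d=1$, as in the paper. The only substantive variation is \emph{how} you prove the cross-term vanishing: you first show $\BJ(\pi(a^{(n)})u,\pi(b^{(n)})v)(z)\to 0$ pointwise in $z$ by a single dominated convergence in $\tau$ with the $p$-independent majorant $(\tau(1-\tau))^{-1/2}\|u\|_{L^2}\|v\|_{L^2}$, then pass to $L^p(\Omega)$ by a second dominated convergence in $z$ using the uniform $L^\infty$ bound $\pi\|u\|_{L^2}\|v\|_{L^2}$ and $|\Omega|<\infty$; the paper instead performs dominated convergence in $\tau$ directly at the level of $\|\cdot\|_{L^p(\Omega)}$, which forces a $p$-dependent $L^1_\tau$-majorant built by a case split ($1\le p\le 2$ via Moyal plus H\"older, $p>2$ via interpolation). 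Your two-step version is arguably a bit cleaner since it avoids that case split. One small point shared with the paper and worth tightening in a final write-up: the passage from ``at most one $\tau_*$ with $\{c_{\tau_*}^{(n)}\}_n$ bounded'' to ``$|c_\tau^{(n)}|\to\infty$ for every $\tau\neq\tau_*$'' is not literally valid for an arbitrary sequence (a sequence can be unbounded yet fail to diverge), and should be justified by the standard subsequence-of-a-subsequence reduction; this is a routine repair and affects both proofs equally.
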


Although it is well known that the averaging process giving rise to the Born--Jordan distribution smooths in the sense that cross- and self-interference fringes are reduced, Proposition~\ref{prop-BJ-wusc} shows that this averaging process even improves regularity on a topological level. This observation could potentially have much broader implications in time-frequency analysis, which we plan to explore in future work, but as far as the scope of this paper goes, Proposition~\ref{prop-BJ-wusc} shows that when \(d=1\) the Born--Jordan concentration problem has a positive answer. 
\begin{theorem} \label{thm:BJ-exist}
	Let $\Omega\subset \R^2$ be measurable with $0<|\Omega|<\infty$ and $1\le p< \infty$. Then
	\[
	L_{\mathrm{BJ}}
	=\sup_{f\in L^2({\R})\setminus\{0\}}\frac{\left(\int_{\Omega}|\BJ f(z)|^p \dd z\right)^{1/p}}{\|f\|_{L^2}^2}
    \]
	is attained by an optimizer.
\end{theorem}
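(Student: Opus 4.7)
The plan is to deploy the direct method of the calculus of variations, which becomes viable the moment Proposition~\ref{prop-BJ-wusc} promotes the Born--Jordan concentration functional from merely upper semicontinuous to genuinely sequentially weakly continuous on $L^2(\R)$. As a preliminary step, I would verify that $L_{\mathrm{BJ}}$ is finite and strictly positive. Positivity is immediate from $\abs{\Omega}>0$ by testing the ratio on, e.g., a Gaussian whose Born--Jordan distribution is nonvanishing on $\Omega$. Finiteness follows from a uniform bound $\|\BJ f\|_{L^\infty(\R^2)}\lesssim \|f\|_{L^2}^2$, which can be obtained by interchanging integration in \eqref{eq:bj-def} with the $L^\infty$ bounds for $W_\tau$ away from the endpoints $\tau\in\{0,1\}$ (and handling the endpoint contributions via the kernel structure of $\Theta$), combined with $\abs{\Omega}<\infty$.

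Next, pick a maximizing sequence $(f^{(n)})\subset L^2(\R)\setminus\{0\}$. By the scaling invariance of the ratio, I may normalize $\|f^{(n)}\|_{L^2}=1$, so that $\|\BJ f^{(n)}\|_{L^p(\Omega)}\to L_{\mathrm{BJ}}$. Banach--Alaoglu then yields a subsequence with $f^{(n)}\rightharpoonup f$ in $L^2(\R)$ for some $f$ with $\|f\|_{L^2}\le 1$. Applying Proposition~\ref{prop-BJ-wusc},
\[
\|\BJ f\|_{L^p(\Omega)} \;=\; \lim_{n\to\infty}\|\BJ f^{(n)}\|_{L^p(\Omega)} \;=\; L_{\mathrm{BJ}} \;>\; 0,
\]
so in particular $f\ne 0$. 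Setting $\alpha\coloneqq\|f\|_{L^2}\in(0,1]$, the ratio evaluated at $f$ equals $L_{\mathrm{BJ}}/\alpha^2$, and since this cannot exceed the supremum $L_{\mathrm{BJ}}$ we must have $\alpha=1$. Therefore $f$ is an optimizer, and the proof is complete.

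The striking feature of this argument is that, in contrast to Theorem~\ref{BigTheorem}, no profile decomposition, no surviving-pair analysis, and no quantification of asymptotic cross interferences is needed: the whole proof is a one-line application of weak continuity followed by a homogeneity check. The real difficulty, and hence the main obstacle, lies entirely upstream in Proposition~\ref{prop-BJ-wusc}, where the low regularity of the Born--Jordan kernel $\Theta$ must be counterbalanced against the smoothing produced by the $\tau$-average in order to upgrade upper semicontinuity (which fails for the Wigner distribution, by Proposition~\ref{prop:wuSCfail}) to full sequential weak continuity in the one-dimensional setting.
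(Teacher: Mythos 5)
Your proposal is correct and is essentially identical to the paper's proof: once Proposition~\ref{prop-BJ-wusc} is in hand, the existence of an optimizer is obtained by the direct method exactly as you describe (maximizing sequence, normalization, weak limit, weak continuity, homogeneity check on the norm of the limit). The paper establishes finiteness of $L_{\mathrm{BJ}}$ via the explicit computation $\int_0^1 (\tau(1-\tau))^{-1/2}\,\dd\tau=\pi$, which is the same integrability-near-the-endpoints observation you gesture at.
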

Let us note right away that by Cauchy-Schwarz's inequality we have 
\begin{equation}  \|\BJ f\|_{L^\infty(\R^2)}\leq\int_0^1\|W_{\tau}f\|_{L^\infty(\R^2)}\; \dd \tau\leq \Big(\int_{0}^1\frac{1}{\sqrt{\tau(1-\tau)}}\; \dd \tau\Big)\|f\|_{L^2(\R)}^2=\pi\|f\|_{L^2(\R)}^2,\end{equation} so \(L_{\mathrm{BJ}}\) is finite (and of course positive). Thus, by the direct method, Theorem~\ref{thm:BJ-exist} follows in a straightforward way once Proposition~\ref{prop-BJ-wusc} is proved. To this aim, we will rely on several properties of the Born--Jordan distribution that are now outlined. We start with the covariance formula, which follows by straightforward integration of the corresponding one for $\tau$-Wigner distributions. 

\begin{prop} \label{prop:BJcov}
	For all $a=(x_a,\xi_a),b=(x_b,\xi_b),z \in\R^2$ and $f,g\in L^2(\R)$,
	\begin{align}
		\BJ(\pi(a)f,\pi(b)g)(z)
		&= \int_0^1 W_\tau(\pi(a)f,\pi(b)g)(z)\, \dd \tau \nonumber\\
		&= e^{2\pi i [z,a-b]}\!
		\int_0^1 e^{2\pi i (x_a-x_b)\cdot(\tau\xi_a+(1-\tau)\xi_b)}\,
		W_\tau(f,g)\big(z-c_\tau(a,b)\big)\,\dd \tau, \label{eq:BJcov}
	\end{align}
        where \(c_\tau(a,b)=((1-\tau)x_a+\tau x_b,\tau \xi_a+(1-\tau)\xi_b).\)
	In particular, for joint shifts ($a=b$) we have the usual translation covariance
	\begin{equation}\label{eq:BJjoint}
		\BJ(\pi(a)f,\pi(a)g)(z)=\BJ(f,g)(z-a),
		\qquad \BJ(\pi(a)f)(z)=\BJ f(z-a).
	\end{equation}
\end{prop}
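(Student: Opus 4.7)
The plan is to derive \eqref{eq:BJcov} directly from the $\tau$-Wigner covariance formula recalled at the beginning of Section~\ref{sec-gen}, and then specialize to $a=b$ to obtain \eqref{eq:BJjoint}. The opening equality in \eqref{eq:BJcov} is nothing but the defining identity $\BJ(F,G)=\int_0^1 W_\tau(F,G)\,\dd\tau$ applied to the shifted pair $(\pi(a)f,\pi(b)g)$.

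For the second equality, I would substitute under the integral sign the pointwise identity
\begin{equation*}
W_\tau(\pi(a)f,\pi(b)g)(z) = e^{2\pi i[z,a-b]}\,e^{2\pi i (x_a-x_b)\cdot(\tau\xi_a+(1-\tau)\xi_b)}\,W_\tau(f,g)\big(z-c_\tau(a,b)\big),
\end{equation*}
and factor out the $\tau$-independent phase $e^{2\pi i[z,a-b]}$. The remaining integrand is jointly measurable in $(\tau,z)$ since both the chirp phase and the center $c_\tau(a,b)$ are affine in $\tau$, while the pointwise evaluation $\tau\mapsto W_\tau(f,g)(w)$ is measurable by a standard approximation argument (first for $f,g\in\cS(\rd)$, where it is continuous, and then extending by density using the $L^\infty$ bound). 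Absolute convergence of the $\tau$-integral in \eqref{eq:BJcov} is ensured by the uniform estimate $\abs{W_\tau(f,g)(w)} \le (\tau(1-\tau))^{-1/2}\norm{f}_{L^2}\norm{g}_{L^2}$, whose right-hand side is integrable on $[0,1]$.

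The joint-shift reduction \eqref{eq:BJjoint} is then immediate: with $a=b$, the symplectic phase $[z,a-b]$ vanishes, the chirp factor trivializes since $x_a-x_b=0$, and the center collapses to $c_\tau(a,a)=\big((1-\tau)x_a+\tau x_a,\,\tau\xi_a+(1-\tau)\xi_a\big)=a$, independently of $\tau$. The identity \eqref{eq:BJcov} therefore reduces to $\BJ(\pi(a)f,\pi(a)g)(z)=\int_0^1 W_\tau(f,g)(z-a)\,\dd\tau=\BJ(f,g)(z-a)$, and taking $g=f$ yields the last relation in \eqref{eq:BJjoint}. There is essentially no obstacle here: the argument is a direct integration of a known pointwise identity, and the only technical point worth checking is the integrability of $(\tau(1-\tau))^{-1/2}$ on $[0,1]$, which validates pulling the $\tau$-integral through and ensures the right-hand side of \eqref{eq:BJcov} is well-defined.
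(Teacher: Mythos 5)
Your proof is correct and takes exactly the approach the paper intends: the paper itself dismisses this as ``follows by straightforward integration of the corresponding one for $\tau$-Wigner distributions,'' and you carry out precisely that integration, while also supplying the measurability and integrability checks (via the $(\tau(1-\tau))^{-1/2}$ bound) that the paper leaves implicit.
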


Let us now substantiate the reduction of interferences by proving that cross-terms do not contribute to the concentration problem. 

\begin{lemma} \label{lem:BJ-vanish}
	Let $\Omega\subset\R^2$ have finite measure and $1 \le p < \infty $. Fix $f,g\in L^2(\R)$ and sequences $a^{(n)},b^{(n)}\in\R^2$ with $|a^{(n)}-b^{(n)}|\to\infty$. Then
	\[
	\lim_{n\to\infty}\ \|\BJ(\pi(a^{(n)})f,\pi(b^{(n)})g)\|_{L^p(\Omega)}=0,
	\qquad
	\lim_{n\to\infty}\ \|\BJ(\pi(b^{(n)})g,\pi(a^{(n)})f)\|_{L^p(\Omega)}=0.
	\]
\end{lemma}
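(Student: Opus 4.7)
\textbf{Proof plan for Lemma~\ref{lem:BJ-vanish}.}

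The idea is to reduce the Born--Jordan cross term to an integrated family of $\tau$-Wigner cross terms via $\BJ=\int_0^1 W_\tau\,d\tau$ and then to apply dominated convergence in the $\tau$ variable. The one-dimensional setting enters through a purely geometric fact: the map $\tau\mapsto c_\tau(a,b)$ is affine and the centers sweep out a straight segment of length exactly $|a-b|$ at constant speed.

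From the covariance formula \eqref{eq:BJcov}, since both the outer phase and the inner phase under the integral have modulus one, one has the pointwise majorization
\[
|\BJ(\pi(a^{(n)})f,\pi(b^{(n)})g)(z)|\le \int_0^1 |W_\tau(f,g)(z-c_\tau^{(n)})|\,d\tau,\qquad c_\tau^{(n)}\coloneqq c_\tau(a^{(n)},b^{(n)}).
\]
Minkowski's integral inequality then yields
\[
\|\BJ(\pi(a^{(n)})f,\pi(b^{(n)})g)\|_{L^p(\Omega)}\le \int_0^1\|W_\tau(f,g)\|_{L^p(\Omega-c_\tau^{(n)})}\,d\tau.
\]
A $\tau$-integrable majorant is provided by the trivial estimate
\[
\|W_\tau(f,g)\|_{L^p(\Omega-c)}\le |\Omega|^{1/p}\|W_\tau(f,g)\|_{L^\infty}\le |\Omega|^{1/p}(\tau(1-\tau))^{-1/2}\|f\|_{L^2}\|g\|_{L^2},
\]
which is integrable on $[0,1]$ since $\int_0^1(\tau(1-\tau))^{-1/2}\,d\tau=\pi$.

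The pointwise-in-$\tau$ step rests on the geometric observation that, since $\tau\mapsto c_\tau^{(n)}$ is affine with constant speed $L_n\coloneqq |a^{(n)}-b^{(n)}|\to\infty$, for every $R>0$ one has
\[
\bigl|\{\tau\in[0,1]:|c_\tau^{(n)}|\le R\}\bigr|\le \frac{2R}{L_n}\to 0.
\]
Given any subsequence of $(n)$, a Borel--Cantelli argument applied for each integer $R$ followed by a diagonal extraction produces a further subsequence $(n_{k_j})$ along which $|c_\tau^{(n_{k_j})}|\to\infty$ for a.e.\ $\tau\in(0,1)$. For such $\tau$, since $W_\tau(f,g)\in C_0(\R^2)$, we have $\|W_\tau(f,g)\|_{L^\infty(\Omega-c_\tau^{(n_{k_j})})}\to 0$ and hence the $L^p(\Omega-c_\tau^{(n_{k_j})})$ norm vanishes as well. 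Dominated convergence then gives vanishing of the integral along the sub-subsequence, and the standard ``every subsequence admits a further subsequence'' criterion upgrades this to convergence of the original sequence.

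The companion vanishing claim for $\BJ(\pi(b^{(n)})g,\pi(a^{(n)})f)$ follows at no extra cost from the Hermitian symmetry $\overline{\BJ(f,g)}=\BJ(g,f)$, a direct consequence of $\overline{W_\tau(f,g)}=W_{1-\tau}(g,f)$ combined with the change of variable $\tau\mapsto 1-\tau$ under the integral defining $\BJ$; this yields $|\BJ(\pi(a)f,\pi(b)g)|=|\BJ(\pi(b)g,\pi(a)f)|$ pointwise. The only delicate step is ensuring the almost-everywhere escape $|c_\tau^{(n)}|\to\infty$ along a suitable subsequence --- and this is precisely where the one-dimensionality is leveraged through the affine path having length equal to $L_n$.
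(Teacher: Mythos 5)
Your proof is correct and follows the same overall strategy as the paper: reduce the Born--Jordan cross-term to an integrated family of $\tau$-Wigner cross-terms, apply Minkowski's integral inequality, exploit the $C_0$ decay of $W_\tau(f,g)$ together with the escape of the centers $c_\tau^{(n)}$ for (almost) every $\tau$, and conclude by dominated convergence in $\tau$. The transposed claim is handled in both cases via the Hermitian symmetry $\overline{\BJ(f,g)}=\BJ(g,f)$.

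Two points of difference are worth noting. First, the key step of establishing the a.e.\ escape $|c_\tau^{(n)}|\to\infty$ is handled more carefully in your version. The paper argues that at most one $\tau_*$ can have a bounded center sequence, using that $c_{\tau_1}-c_{\tau_2}=(\tau_1-\tau_2)R(b^{(n)}-a^{(n)})$ with $R$ invertible, and then concludes escape for all $\tau\ne\tau_*$; as stated this conflates unboundedness with divergence (the contradiction only bites when the two bounded subsequences share an index set), so it really needs the extra step of passing to subsequences and invoking the standard subsequence principle. Your Borel--Cantelli route — bounding $|\{\tau:|c_\tau^{(n)}|\le R\}|\le 2R/|a^{(n)}-b^{(n)}|$ from the affine constant-speed path and then rarefying the index sequence — does exactly this cleanly, and is arguably the more robust formulation of the same geometric mechanism. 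Second, for the dominated-convergence majorant you use the uniform bound $\|W_\tau(f,g)\|_{L^p(\Omega-c)}\le |\Omega|^{1/p}(\tau(1-\tau))^{-1/2}\|f\|_{L^2}\|g\|_{L^2}$, valid for all $1\le p<\infty$ since $(\tau(1-\tau))^{-1/2}\in L^1(0,1)$ in dimension $d=1$. The paper instead splits into $p\le 2$ (where the $L^2$ bound gives a $\tau$-independent constant) and $p>2$ (where interpolation gives the sharper $(\tau(1-\tau))^{-\frac12(1-2/p)}$). Your majorant is weaker but still integrable, so nothing is lost for the lemma as stated in $d=1$; the paper's finer split is what survives in the later discussion of higher dimensions, where the uniform $L^\infty$ majorant ceases to be integrable in $\tau$.
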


\begin{proof}
	Let $\mathcal{B}_n(\tau;\cdot)\coloneqq W_\tau(\pi(a^{(n)})f,\pi(b^{(n)})g)(\cdot)$. By \eqref{eq-covtau},
	\[
	\|\mathcal{B}_n(\tau;\cdot)\|_{L^p(\Omega)}
	=\|W_\tau(f,g)(\cdot-c_\tau(a^{(n)},b^{(n)}))\|_{L^p(\Omega)}.
	\]
    We claim that there is at most one $\tau_* \in [0,1]$ such that $\{c_{\tau_*}(a^{(n)},b^{(n)})\}_n$ is bounded --- as a result, for any fixed $\tau\neq\tau_*$ we have $|c_\tau(a^{(n)},b^{(n)})|\to\infty$ as $n\to\infty$. Indeed, the following identity follows directly from the definition of \(c_\tau\):	
	\begin{equation}\label{eq:center-diff}
		c_{\tau_1}(a,b)-c_{\tau_2}(a,b)
		=(\tau_1-\tau_2)\,\big(x_b-x_a,\ -(\xi_b-\xi_a)\big)
		=(\tau_1-\tau_2)\,R\,(b-a),
	\end{equation}
	with $R=\mathrm{diag}(I_x,-I_\xi)$ invertible. Consequently, if two distinct $\tau_1\ne\tau_2$ both yield bounded center sequences along a subsequence, then \eqref{eq:center-diff} implies $(\tau_1-\tau_2)\,R\,(b^{(n)}-a^{(n)})$ is bounded, contradicting $|a^{(n)}-b^{(n)}|\to\infty$.

    Therefore, for any fixed $\tau\neq\tau_*$ we have $|c_\tau(a^{(n)},b^{(n)})|\to\infty$. The standard argument from the proof of Theorem~\ref{thm-avgbd} thus shows that \[
	\|\mathcal{B}_n(\tau;\cdot)\|_{L^p(\Omega)}\xrightarrow[n\to\infty]{}0
	\qquad\text{for each fixed }\tau\neq\tau_*.
	\]
	We wish to invoke dominated convergence with respect to $\tau$, so that 
		\[
	\|\BJ(\pi(a^{(n)})f,\pi(b^{(n)})g)\|_{L^p(\Omega)}
	\le \int_0^1 \|\mathcal{B}_n(\tau;\cdot)\|_{L^p(\Omega)}\, \dd \tau
	\ \xrightarrow[n\to\infty]{}\ 0.
	\]
	All we need is to find $h \in L^1(0,1)$ such that
	\(
	\|\mathcal{B}_n(\tau;\cdot)\|_{L^p(\Omega)}\lesssim h(\tau) 
	\)
	for all $\tau\in(0,1)$ and all $n$. To this aim, when $1 \le p \le 2$ we can resort to the embedding $L^2(\Omega) \subset L^p(\Omega)$, H\"older's inequality and \cite[Theorem 4.14]{CT_20} to obtain
	\begin{equation}
		\|\mathcal{B}_n(\tau;\cdot)\|_{L^p(\Omega)} \lesssim |\Omega|^{1/p-1/2}	\|\mathcal{B}_n(\tau;\cdot)\|_{L^2} \lesssim |\Omega|^{1/p-1/2} \|f\|_{L^2}\|g\|_{L^2}. 
	\end{equation}
	When $p>2$ we simply interpolate: 
	\begin{align}
		\| \mathcal{B}_n(\tau;\cdot) \|_{L^p(\Omega)} & \le \| \mathcal{B}_n(\tau;\cdot) \|_{L^2(\Omega)}^{2/p}
		\| \mathcal{B}_n(\tau;\cdot) \|_{L^\infty(\Omega)}^{1-2/p} \\
		& \lesssim (\tau(1-\tau))^{-\frac{1}{2}(1-\frac{2}{p})}\|f\|_{L^2} \|g\|_{L^2},
	\end{align} and the resulting function is summable on $(0,1)$ for every $p>2$. The analogous inequality follows from the identity \(\BJ(f,g)=\overline{\BJ(g,f)}\).
\end{proof}

We can now prove the continuity result for $\BJ$.

\begin{proof}[Proof of Proposition~\ref{prop-BJ-wusc}]
Set $h^{(n)}\coloneqq f^{(n)}-f$, so $h^{(n)}\rightharpoonup 0$ in $L^2(\R)$. By bilinearity,
\begin{equation}\label{eq-BJ-split}
\BJ(f^{(n)})=\BJ(f)+\BJ(h^{(n)})+2\Re(\BJ(f,h^{(n)})).
\end{equation}
It is therefore enough to prove that
\begin{equation}\label{eq:two-vanish}
\|\BJ(h^{(n)})\|_{L^p(\Omega)}\to 0
\qquad \text{and} \qquad
\|\BJ(f,h^{(n)})\|_{L^p(\Omega)}\to 0.
\end{equation}
Since \(h^{(n)}\) is a bounded sequence, we can without loss of generality assume \(\|h^{(n)}\|_{L^2}\leq 1\) for all \(n\). 

\smallskip
\noindent \textbf{Step 1.} \textit{Profile decomposition of $h^{(n)}$ and absence of compact profiles.}
We invoke Lemma~\ref{lem-profdec} and apply it to the bounded sequence $h^{(n)}$: For each fixed $k\in\N$ we can write
\[
h^{(n)}=\sum_{j=1}^k \pi(z_j^{(n)}) \phi_j + w_k^{(n)}, 
\]
with
\begin{align*}
|\zjn-\zjpn|&\to\infty\ \  (j\ne j'), &
\sum_{j=1}^\infty\|\phi_j\|_{L^2}^2&\le 1, &
\lim_{k\to\infty} \limsup_{n\to\infty}\|w_k^{(n)}\|_{M^\infty}&=0.
\end{align*}
Since $h^{(n)}\rightharpoonup 0$, no compact profile can occur: If some $z_{j_0}^{(n)}$ were bounded in $\rdd$, then $\pi(z_{j_0}^{(n)})^*h^{(n)}\rightharpoonup \phi_{j_0}\ne 0$, contradicting $h^{(n)}\rightharpoonup 0$. Hence
\begin{equation}\label{eq-all-escape}
|z_j^{(n)}| \to \infty \quad \text{for every } j\ge 1.
\end{equation}

\smallskip
\noindent \textbf{Step 2.} \textit{Vanishing of the profile contributions.}
By covariance, inner regularity and the fact that $\BJ\phi_j\in C_0(\R^2)$ for $d=1$ (see, e.g., \cite[Example~6.1]{Tauberian}), for each fixed $j$ we infer 
\[
\|\BJ(\pi(z_j^{(n)})\phi_j)\|_{L^p(\Omega)}
=\|\BJ\phi_j(\cdot-z_j^{(n)})\|_{L^p(\Omega)} \to 0. 
\]
Moreover, for $j\ne j'$ we have $|\zjn -\zjpn|\to\infty$, hence Lemma~\ref{lem:BJ-vanish} yields for every $j\ne j'$
\begin{equation}\label{eq:cross-vanish}
\lim_{n\to\infty}\|\BJ(\pi(z_j^{(n)})\phi_j,\pi(z_{j'}^{(n)})\phi_{j'})\|_{L^p(\Omega)}=0.
\end{equation}
Summing over $j=1,\dots,k$ and then letting $k$ be fixed, we therefore get
\begin{equation}\label{eq-pure-vanish}
\lim_{n\to\infty} \Big\|\BJ\Big(\sum_{j=1}^k \pi(z_j^{(n)})\phi_j\Big)\Big\|_{L^p(\Omega)}=0.
\end{equation}

\smallskip
\noindent \textbf{Step 3.} \textit{Remainder terms.} 
Fix a compact set $K\subset\R^2$ with $|\Omega\setminus K|$ arbitrarily small. The terms accounting for the remainders $\wkn$ vanish by an argument similar to the one already used before, since by Theorem~\ref{thm-embedFull} we have
\begin{align} 
	\| \BJ(f,g) \|_{L^2(K)} &\lesssim_{K} \int_0^1 \| W_\tau(f,g) \|_{W( L^2,L^\infty)} \dd \tau \\ & \lesssim \Big(  \int_0^1 (\tau(1-\tau))^{-1/2} \dd \tau \Big) \|f\|_{L^2} \|g \|_{M^\infty}\\
    &\lesssim \|f\|_{L^2} \|g \|_{M^\infty}.
\end{align}
As such, arguing as in the proof of Theorem~\ref{BigTheorem} this bound is sufficient to show that
	\[
	\lim_{k\to\infty}\limsup_{n\to\infty}\ \|\BJ(\Fkn,\wkn)\|_{L^p(\Omega)}=0,
	\]
	and analogously for $\BJ(\wkn,\Fkn)$ and $\BJ(\wkn)$.

\smallskip
\noindent \textbf{Step 4.} \textit{Conclusion.}
Expanding $\BJ(h^{(n)})$ by sesquilinearity into pure, cross and remainder terms, then combining the results in the previous steps, after letting  $n\to\infty$ first and then $k\to\infty$ (which we can do by the same reasoning as in the proof of Theorem~\ref{BigTheorem}), we obtain
\[
\lim_{n \to \infty} \|\BJ(h^{(n)})\|_{L^p(\Omega)}=0.
\]
The same argument allows us to deal with $\BJ(f,h^{(n)})$, where one can resort to Lemma~\ref{lem:BJ-vanish} with the fixed packet $f$ (i.e., take $a^{(n)}=\zjn$, $b^{(n)}=0$ so that $|a^{(n)}-b^{(n)}|\to\infty$) to get
\[
\lim_{n\to\infty}\|\BJ(f,\pi(z_j^{(n)})\phi_j)\|_{L^p(\Omega)}=0
\qquad\text{for each fixed }j.
\]
Therefore, the claim $\|\BJ f^{(n)}-\BJ f\|_{L^p(\Omega)}\to 0$ is proved. 
\end{proof}

We conclude this section by detailing the behavior of the $L^\infty$ concentration problem. 
\begin{prop}
            Let \(\Omega\subset \R^{2}\) be measurable such that \(0<|\Omega|<\infty\). Then
    \begin{align}
        \sup_{f\in L^2(\R)\setminus\{0\}}\frac{\|\BJ f \|_{L^\infty(\Omega)}}{\|f\|_{L^2}^2}=\pi,
    \end{align}
    but the supremum is not attained.
    \end{prop}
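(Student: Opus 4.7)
The plan is to combine a Minkowski-type upper bound with a reduction of the $L^\infty(\Omega)$-concentration problem to evaluation of $\BJ g$ at the origin, and then to exhibit an almost-extremizing sequence built from approximately homogeneous dilations. Non-attainment will then follow by revisiting the rigidity of the pointwise inequality already used in the proof of Proposition~\ref{prop:tauinfty}.

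First I would establish the upper bound by writing $\BJ f=\int_0^1 W_\tau f\,d\tau$ and applying Minkowski in $\tau$ together with the elementary bound $\|W_\tau f\|_{L^\infty}\le (\tau(1-\tau))^{-1/2}\|f\|_{L^2}^2$, so that $\|\BJ f\|_{L^\infty(\Omega)}\le \|\BJ f\|_{L^\infty(\R^2)}\le \pi\|f\|_{L^2}^2$. I would then reduce the concentration problem to evaluation at the origin by the same argument used in the Wigner $L^\infty$ case: since $\BJ f\in C_0(\R^2)$, the joint-shift covariance $|\BJ(\pi(a)f)(z)|=|\BJ f(z-a)|$, together with the selection of a Lebesgue density point $c\in\Omega$, allows one to shift the concentration peak onto $c$ and read it off at the origin, yielding
\[
\sup_{f\neq 0}\frac{\|\BJ f\|_{L^\infty(\Omega)}}{\|f\|_{L^2}^2}
= \sup_{g\neq 0}\frac{|\BJ g(0)|}{\|g\|_{L^2}^2}.
\]

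The key point is then to prove that this right-hand supremum equals $\pi$. As in Proposition~\ref{prop:tauinfty}, I would rewrite $W_\tau g(0)=(\tau(1-\tau))^{-1/2}\langle D_\tau g, D_{-(1-\tau)}g\rangle$ with $D_a g(y)=|a|^{1/2}g(ay)$ the unitary dilation by $a\ne 0$, so that Cauchy--Schwarz gives $|W_\tau g(0)|\le(\tau(1-\tau))^{-1/2}\|g\|_{L^2}^2$. To saturate this pointwise bound in the limit I would test against the normalized, approximately homogeneous sequence
\[
g_n(y) = \tfrac{1}{2\sqrt{\log n}}\,|y|^{-1/2}\,\chi_{\{1/n\le|y|\le n\}}(y),
\]
which is even, nonnegative, and has unit $L^2$ norm. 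A direct computation, exploiting the exact invariance of $|y|^{-1/2}$ under dilation and estimating the boundary mismatch, yields $\langle D_r g_n,g_n\rangle = 1 - |\log|r||/(2\log n)\to 1$ for every $r<0$, hence $W_\tau g_n(0)\to (\tau(1-\tau))^{-1/2}$ for every $\tau\in(0,1)$. Nonnegativity of $g_n$ ensures $W_\tau g_n(0)\ge 0$, and $|W_\tau g_n(0)|\le(\tau(1-\tau))^{-1/2}$ is dominated by an integrable function, so dominated convergence produces $\BJ g_n(0)\to \int_0^1 (\tau(1-\tau))^{-1/2}\,d\tau=\pi$.

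For non-attainment, if the supremum were attained by some $f^*\ne 0$ the reduction above would produce $g\ne 0$ saturating $|\BJ g(0)|=\pi\|g\|_{L^2}^2$, forcing equality in the chain
\[
\pi\|g\|_{L^2}^2 = |\BJ g(0)|\le\int_0^1|W_\tau g(0)|\,d\tau\le\int_0^1(\tau(1-\tau))^{-1/2}\,d\tau\,\|g\|_{L^2}^2=\pi\|g\|_{L^2}^2,
\]
hence $|W_\tau g(0)|=(\tau(1-\tau))^{-1/2}\|g\|_{L^2}^2$ for a.e.\ $\tau$. By the Cauchy--Schwarz rigidity argument employed in Proposition~\ref{prop:tauinfty}, each such $\tau\ne 1/2$ would require $g$ to be a unit-modulus eigenfunction of a dilation $D_a$ with $|a|\ne 1$, which is impossible in $L^2(\R)\setminus\{0\}$. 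I expect the main technical obstacle to be the construction of the extremizing sequence $g_n$ and verification that $\langle D_r g_n,g_n\rangle\to 1$ for every $r<0$ with sufficient uniformity to pass to the limit inside the $\tau$-integral; the self-similar ansatz makes this concrete, producing a logarithmic error that vanishes as $n\to\infty$.
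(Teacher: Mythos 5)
Your argument is correct, and it takes a genuinely different route from the paper. Both proofs begin with the same reduction, via covariance, $C_0$-decay, and a Lebesgue density point, to the quantity $\sup_{g\ne 0}|\BJ g(0)|/\|g\|_{L^2}^2$, and both establish the upper bound $\pi$ by the pointwise Cauchy--Schwarz estimate $\|W_\tau f\|_\infty\le(\tau(1-\tau))^{-1/2}\|f\|_{L^2}^2$ integrated in $\tau$. Where you diverge is the lower bound: the paper rewrites $\BJ g(0)=\langle Sg,g\rangle$ for a self-adjoint operator $S$ built from the dilation group and the reflection, conjugates the restriction to even functions by the unitary $(Uf)(y)=\sqrt{2}\,e^{y/2}f(e^y)$ to obtain a convolution operator with kernel $k(t)=\tfrac{1}{2}\operatorname{sech}(t/2)$, and reads off $\|S\|=\widehat{k}(0)=\pi$ via Plancherel; non-attainment then follows because the Fourier multiplier $\widehat{k}$ attains its sup only on a null set. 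You instead exhibit an explicit almost-extremizing sequence of logarithmically truncated dilation-homogeneous profiles $g_n=\tfrac{1}{2\sqrt{\log n}}|y|^{-1/2}\chi_{\{1/n\le|y|\le n\}}$, compute $\langle D_r g_n,g_n\rangle=\max\bigl(0,\,1-|\log|r||/(2\log n)\bigr)\to 1$ for each fixed $r$, and pass to the limit in the $\tau$-integral by dominated convergence to obtain $\BJ g_n(0)\to\pi$; non-attainment follows from the Cauchy--Schwarz rigidity already used for $\tau$-Wigner (a unit-modulus dilation eigenvector cannot exist in $L^2\setminus\{0\}$). The paper's operator-theoretic route is slicker and yields the explicit Fourier multiplier structure of the problem; your route is more elementary and self-contained, connects more directly to Proposition~\ref{prop:tauinfty}, and has the merit of exhibiting concretely \emph{how} the bound $\pi$ is approached, namely via functions approximating the scale-invariant profile $|y|^{-1/2}$.

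One small expository point: as written, $\langle D_r g_n,g_n\rangle=1-|\log|r||/(2\log n)$ is only valid when $|\log|r||\le 2\log n$ (otherwise the truncated supports are disjoint and the inner product is $0$); this does not affect the pointwise limit, but is worth noting since as $\tau\to 0^+$ or $\tau\to 1^-$ the relevant $|\log|r||$ is unbounded for fixed $n$, which is precisely the regime handled by the dominating function $(\tau(1-\tau))^{-1/2}$.
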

    \begin{proof}
            The Born--Jordan distribution is covariant and when \(d=1\) we have \(\BJ f\in C_0(\R^{2})\) for all \(f\in L^2(\R)\) (see~\cite[Example 6.1]{Tauberian}). Therefore, we mimic the proof (and notation) of Proposition~\ref{prop:tauinfty}, so without loss of generality we can assume that the \(\infty\)-norm is attained at \(z=0\) and we need to compute 
            \[\sup_{f\in L^2(\R)\setminus\{0\}}\frac{|\BJ f(0)|}{\|f\|_{L^2}^2}. \]
Now, with the substitution $s=\tfrac{\tau}{1-\tau}$ we have 
        \begin{equation} \label{eq-bja}
\BJ f(0)=\int_0^1 (\tau(1-\tau))^{-1/2}\,\big\langle D_{-\frac{\tau}{1-\tau}} f,f\big\rangle\, \dd \tau = \int_0^\infty \frac{s^{-1/2}}{1+s}\,\langle D_{-s} f,f\rangle\, \dd s. \end{equation} The upper bound $\abs{\BJ f(0)} \le \pi \|f\|_{L^2}^2$ is then clear. 

If $Rf(y)=f(-y)$ denotes the reflection operator, after setting $s=e^t$ with $t \in \R$ and introducing the strongly continuous unitary group $G_t = D_{e^t}$, we can write 
\[ \BJ f(0) = \int_\R \frac{1}{2\cosh(t/2)}\, \langle G_t R f,f \rangle\, \dd t  = \langle Sf,f\rangle, \qquad f \in L^2(\R), \] where we introduced the self-adjoint operator-valued integral
\[ S \coloneqq \int_\R k(t) G_t R \ \dd t, \qquad k(t)=\frac{1}{2\cosh(t/2)},\] 
which is well defined since $\norm{S}_{L^2 \to L^2}\le \norm{k}_{L^1} < \infty$.

Since $R$ commutes with $G_t$, $S$ leaves the subspaces of odd and even functions invariant. If $T$ denotes the restriction to $L^2_e(\R)$, the restriction to odd functions coincides with $-T$, hence $\norm{S}_{L^2 \to L^2}=\max\{\norm{T}_{L^2_e \to L^2},\norm{-T}_{L^2_o \to L^2}\}=\norm{T}_{L^2_e \to L^2}$. In particular, we have
\[ \BJ f(0) = \int_\R \frac{1}{2\cosh(t/2)}\, \langle G_t f,f \rangle\, \dd t  = \langle Tf,f\rangle, \qquad f \in L^2_e(\R). \]
Let us now introduce the unitary operator $U \colon L^2_e(\R)\to L^2(\R)$ to be 
\[
(Uf)(y)=\sqrt{2} \ e^{y/2}f(e^y), 
\]
satisfying $UG_tU^{-1}=T_t$, that is $G_t$ is unitarily equivalent to the translation group $(T_t f)(y)=f(y+t)$. Since $U$ is unitary, $G_t$ strongly continuous and $k \in L^1(\R)$, we may pass $U$ through the Bochner integral and obtain
\[
UTU^{-1}f=\int_{\R} k(t)T_t f \ \dd t=(k*f),
\]
hence $UTU^{-1}$ is the convolution operator with kernel $k$ on $L^2(\R)$. As such, we obtain by Plancherel's theorem
\[ \|UTU^{-1}f\|_{L^2}=\|k*f\|_{L^2}=\|\widehat{k}\widehat{f}\|_{L^2}\le \|\widehat{k}\|_{L^\infty}\|f\|_{L^2}.
\]  An easy computation shows that $\widehat{k}(\xi)=\pi \ \mathrm{sech}(2\pi^2\xi)$, whence \[\|T\|_{L^2_e \to L^2}=\|UTU^{-1}\|_{L^2 \to L^2}=\|\widehat{k}\|_\infty = \widehat{k}(0)= \pi.\] 

Non-attainment on even functions follows since equality in H\"older's inequality $\|\widehat{k} \widehat{f}\|_2\le \|\widehat{k}\|_\infty\|\widehat{f}\|_2$ with $f \in L^2_e(\R)\setminus\{0\}$ would force $|\widehat{k}|=\|\widehat{k}\|_\infty$ a.e. on the support of $\widehat{f}$, or equivalently $\widehat{f}$ to be supported where $|\widehat{k}|=\pi$, i.e.\ at $\{\xi=0\}$, a null set, leading to $\widehat{f}=0$. Since $S=-T$ on odd functions, non-attainment on the whole $L^2(\R)$ follows. 
\end{proof}

We would ideally like to extend our existence result for the Born--Jordan distribution to any dimension \(d\). However, the Born--Jordan distribution decays significantly worse when \(d>1\). For instance, the integral $\int_0^1 C(\tau) \dd \tau$ (cf.\ Theorem~\ref{thm-embedFull}) needed to control the error terms in the profile decomposition diverges when \(d>1\). Similarly, the proof of Lemma~\ref{lem:BJ-vanish} extends to higher dimension with a tradeoff between $p$ and $d$: The claim still holds for all $1 \le p < p_*(d)$, where
	\begin{equation}
		p_*(d) \coloneqq \begin{cases}
			\infty & (d=1,2) \\
			\tfrac{2d}{d-2} & (d\ge 3).
		\end{cases}
    \end{equation}
Therefore, the contribution from the cross terms is \(0\) when \(d<3\), but for \(d\geq 3\) the cross-term contribution is currently guaranteed to be \(0\) only provided that \(p<p_*(d)\). As such, there is reason to believe that the Born--Jordan problem is only well-posed when \(d=1\).

\begin{question} Is \(L_{\mathrm{BJ}}=\infty\) when $d>1$? What happens if we restrict to \(f\in M^q(\rd)\) with $1 \le q <2$ instead?
\end{question}

\section*{Acknowledgments}

It is a pleasure to thank Franz Luef for discussions on the topics of this note. We also thank José Luis Romero for stimulating remarks on a first draft which led us to better highlight some results (namely, Propositions~\ref{prop:wuSCfail} and~\ref{prop-BJ-wusc}). 

The second author thanks Politecnico di Torino for its hospitality while on a visit in Spring 2025 during which the work started.

The third author is a member of Gruppo Nazionale per l'Analisi Matematica, la Probabilit\`a e le loro Applicazioni (GNAMPA) --- Istituto Nazionale di Alta Matematica (INdAM). 

\bibliographystyle{elsarticle-num}
%\bibliography{bibliography}

\end{document}